\nc\arith{\mathrm{arith}}
\nc\mero{\mathrm{mero}}
\nc\dr{\mathrm{dR}}
\nc\sheafom{\underline{\boldsymbol\omega}}
\nc\ocan{\omega_{\mathrm{can}}}
\nc\xcan{\xi_{\mathrm{can}}}
\nc\wk{\mathrm{wk}}
\nc\wx{\mathrm{wk-ex}}
\nc\locz{\mathrm{loc}_z}
\dmo{\Sym}{Sym}
\dmo{\Res}{Res}
\dmo{\ord}{ord}
\nc\cE{\calE}\nc\cO{\calO}
\nc\tate{\mathrm{Tate}}
\newcommand{\x}{x(\tau)}
\newcommand{\y}{y(\tau)}
\newcommand{\SL}{{\text {\rm SL}}}
\newcommand{\GL}{{\text {\rm GL}}}
\newcommand{\sm}[4]{\left(\begin{smallmatrix}#1&#2\\ #3&#4 \end{smallmatrix}
\right)}
\def\H{\mathbb{H}}
\nc\Qm{\Q(\zeta_m)}
\nc\chip{\chi_\mathfrak{p}}
\nc\p{\mathfrak{p}}
\nc\Fq{\bbF_q}
\nc\chiN{\chi_{2N}}
\nc\E{\mathcal{E}^N_t}
\theoremstyle{definition}
\theoremstyle{remark}
\numberwithin{equation}{section}
\begin{document}

\title{ Modular forms, de Rham cohomology and congruences}


\author{Matija Kazalicki}
\address{Department of Mathematics\\
University of Zagreb\\
Bijenicka cesta 30\\
Zagreb, Croatia\\}

\email{mkazal@math.hr}

\author{Anthony J.~Scholl}
\address{Department of Pure Mathematics and Mathematical Statistics\\
Centre for Mathematical Sciences\\
Wilberforce Road\\
Cambridge CB3 0WB\\}

\email{a.j.scholl@dpmms.cam.ac.uk}

\subjclass[2010]{Primary 14F40, 11F33, 11F80 }

\date{}

\maketitle

\begin{abstract}
  In this paper we show that Atkin and Swinnerton-Dyer type of
  congruences hold for weakly modular forms (modular forms that are
  permitted to have poles at cusps). Unlike the case of original
  congruences for cusp forms, these congruences are nontrivial even
  for congruence subgroups. On the way we provide an explicit
  interpretation of the de Rham cohomology groups associated to
  modular forms in terms of ``differentials of the second kind''. As
  an example, we consider the space of cusp forms of weight 3 on a
  certain genus zero quotient of Fermat curve $X^N+Y^N=Z^N$. We show
  that the Galois representation associated to this space is given by
  a Grossencharacter of the cyclotomic field $\Q(\zeta_N)$. Moreover,
  for $N=5$ the space does not admit a ``$p$-adic Hecke eigenbasis''
  for (non-ordinary) primes $p\equiv 2,3 \pmod{5}$, which provides a
  counterexample to Atkin and Swinnerton-Dyer's original speculation
  \cite{ASD,KIB,LLY}.
\end{abstract}


\section{Introduction}

In \cite{ASD}, Atkin and Swinnerton-Dyer described a remarkable family
of congruences they had discovered, involving the Fourier coefficients
of modular forms on noncongruence subgroups. Their data suggested (see
\cite{LLY} for a precise conjecture) that the spaces of cusp forms of
weight $k$ for a noncongruence subgroup, for all but finitely many
primes $p$, should possess a $p$-adic Hecke eigenbasis in the sense
that Fourier coefficients $a(n)$ of each basis element satisfy
\[
a(pn)-A_p a(n)+\chi(p)p^{k-1}a(n/p)\equiv 0 \pmod{
  p^{(k-1)(1+\ord_p(n))}},
\]
where $A_p$ is an algebraic integer and $\chi$ is a Dirichlet
character (depending on the basis element, but not on $n$). This
congruence relation is reminiscent of the relation between Fourier
coefficients of Hecke eigenforms for congruence subgroups (which is
surprising since there is no useful Hecke theory for modular forms on
noncongruence subgroups).

Following work by Cartier \cite{Ca}, Ditters \cite{Di} and Katz
\cite{Ka}, the second author proved a substantial part of these
congruences in \cite{SASD}. There remain various questions concerning
the optimal shape of these congruences in the case when the dimension
of the space of cusp forms is greater than one, see
\cite{ALL,LLY,Long}.

In this paper we show that similar congruences (also initially
discovered experimentally) hold for weakly modular forms (that is,
modular forms which are permitted to have poles at cusps). Unlike the
case of Atkin--Swinnerton-Dyer's original congruences for cusp forms,
these congruences are nontrivial even for congruence subgroups
(because the Hecke theory of weakly modular forms is not so good).
The simplest case is the weakly modular form of level 1 and weight 12
\begin{align*}
E_4(z)^6/\Delta(z) &- 1464 E_4(z)^3 = q^{-1} + \sum_{n=1}^\infty
a(n)q^n \\
&= q^{-1} - 142236q + 51123200q^2 + 39826861650q^3 + \cdots
\end{align*}
For every prime $p\ge11$ and integer $n$
with $p^s | n$, its coefficients satisfy the congruence
\[
a(np)-\tau(p)a(n)+p^{11}a(n/p) \equiv 0 \pmod {p^{11s}}.
\]
where $\tau(n)$ is Ramanujan's function. (Note that the coefficients
$a(n)$ grow too rapidly to satisfy any multiplicative identities.)
These and other examples may be found in \S\ref{sec:ex} below.

In the second part of the paper we consider, for an odd integer $N$,
the space of weight 3 cusp forms on a certain genus zero quotient of
Fermat curves $X^N+Y^N=Z^N$.  These cusp forms are CM forms in the
sense that the Galois representation associated to them is given by a
Grossencharacter of the cyclotomic field $\Q(\zeta_N)$. We show that for
$N=5$ the space of weight 3 cusp forms does not admit a $p$-adic Hecke
eigenbasis for (non-ordinary) primes $p\equiv 2,3 \pmod{5}$. Moreover,
for the better understanding of the congruences arising from the
action of Frobenius endomorphism in this situation, we define certain
weakly modular forms, and prove some congruences for them. For more
details see \S\ref{sec:cong}.

In \cite{SASD} congruences were obtained by embedding the module
of cusp forms of weight $k$ (on a fixed subgroup $\Gamma$) into a de
Rham cohomology group $DR(X,k)$, where $X$ is the modular curve
associated to $\Gamma$. This cohomology group is the de Rham
realisation of the motive \cite{SMF} associated to the relevant space
of modular forms. At a good prime $p$, crystalline theory endows
$DR(X,k)\otimes\Z_p$ with a Frobenius endomorphism, whose action
on $q$-expansions can be explicitly computed, and this gives rise to
the Atkin--Swinnerton-Dyer congruences. (See the introduction of
\cite{SASD} for more explanation.) Here we observe that there is an
simple description of $DR(X,k)$ in terms of  ``forms of the
second kind''.  Curiously, such a description does not appear to be
explicitly given anywhere in the literature (although it is implicit
in Coleman's work on $p$-adic modular forms).  The period isomorphism
is particularly transparent in this interpretation.

\section{Summary of theoretical results}\label{sec:summary}

Let $\Gamma\subset SL_2(\Z)$ be a subgroup of finite index. We choose
a number field $K=K_\Gamma\subset\C$ and a model $X_K$ over $K$ for
the compactified modular curve $\Gamma\backslash\frakH^*$ such that:
\begin{itemize}
\item the $j$-function defines a morphism $\pi_K\colon X_K \to
  \bbP^1_K$; and
\item the cusp $\infty\in \Gamma\backslash\frakH^*$ is a
  rational point of $X_K$.
\end{itemize} 
Let $m$ be the width of the cusp $\infty$. Then the completed local
ring $\widehat{\calO_{X,\infty}}$ equals $K[[t]]$ for some $t$ with $\delta
t^m=q$, with $\delta\in K^*$.

Let $X_K^o\subset X_K$ be the complement of the points where the
covering $\frakH\to X_K(\C)$ is ramified.  On $X_\C^o$ we have the
standard line bundle $\sheafom_\C$, such that modular forms of weight
$k$ are sections of $\sheafom_\C^{\otimes k}$, and the canonical
isomorphism $\psi_\C\colon \sheafom_\C^{\otimes 2}\isomarrow
\Omega^1_{X_\C^o}(\log\,\mathrm{cusps})$, identifying forms of weight
2 with holomorphic 1-forms on $X_\C$.  The fibre at infinity has a
canonical generator $\varepsilon_\C\in\sheafom_\C(\infty)$.  If
$-1\notin\Gamma$ we also assume that this structure comes from a
triple $(\sheafom_K,\psi_K,\varepsilon_K\in \sheafom_K(\infty))$ on
$X_K^o$.

We choose a finite set $S$ of primes of $K$, and write $R=\mathfrak{o}_{K,S}$,
satisfying:

\begin{itemize}
\item $6m$ and $\delta$ are in $R^*$; 
\item there exists a smooth projective curve $X/R$ with
  $X_K=X\otimes_RK$, and $\pi_K$ extends to a finite morphism
  $\pi\colon X \to \bbP^1_R$ which is \'etale away from
  $j\in\{\infty,0,1728\}$; 
\item if $-1\notin\Gamma$, $(\sheafom_K,\psi_K,\varepsilon_K)$ extends
  to a triple $(\sheafom,\psi, \varepsilon)$ on $X^o$, with
  $\sheafom(\infty)=R\varepsilon$.
\end{itemize}

Any modular or weakly modular form on $\Gamma$ has a Fourier expansion
at $\infty$ which lies in $\C((q^{1/m}))=\C((t))$.  For any subring
$R'$ of $\C$ containing $R$, and any $k\ge2$, let $S_k(\Gamma,R')$,
$M_k(\Gamma,R')$ be the $R'$-modules of cusp (resp.~modular) forms on
$\Gamma$ of weight $k$ whose Fourier expansions at $\infty$ lie in
$R'[[t]]$. Standard theory shows that $S_k(\Gamma,R)$, $M_k(\Gamma,R)$
are locally free $R$-modules and that, for any $R'$,
\[
S_k(\Gamma,R')=S_k(\Gamma,R)\otimes_R R',\quad 
M_k(\Gamma,R')=M_k(\Gamma,R)\otimes_R R'
\]
For any integer $s$, denote by $M_s^{\wk}(\Gamma,R')$ the $R'$-module
of weakly modular forms (meromorphic at all cusps) of weight $s$ whose
Fourier expansions at $\infty$ lie in $R'((t))$, and let
$S_s^{\wk}(\Gamma,R')$ be the submodule consisting of those $f\in
M^{\wk}_s(\Gamma,R')$ whose constant term at each cusp vanishes.

It is well known that if $k\ge 2$ there is a linear map
\[
\partial^{k-1} \colon M_{2-k}^{\wk}(\Gamma,\C) \to
S_{k}^{\wk}(\Gamma,\C)
\]
which on Fourier expansions (at any cusp) is given by
$(q\,d/dq)^{k-1}$. Consequently $\partial^{k-1}$ maps
$M^{\wk}_{2-k}(\Gamma,R')$ into $S^{\wk}_k(\Gamma,R')$.

\begin{defn}
  Suppose $K\subset K'\subset \C$. Define for $k\ge 2$
  \[
  DR(\Gamma,K',k)=\frac{S^{\wk}_k(\Gamma,K')}{\partial^{k-1}(
    M^{\wk}_{2-k}(\Gamma,K'))}
  \]
  and
  \[
  DR^*(\Gamma,K',k)=\frac{ M^{\wk}_k(\Gamma,K')}{\partial^{k-1}(
    M^{\wk}_{2-k}(\Gamma,K'))}
  \]
\end{defn}
It is clear that for every $K'$, $DR(\Gamma,K',k) =
DR(\Gamma,K,k)\otimes_{K}K'$, and similarly for $DR^*$.

If $R\subset R'\subset\C$ and $f\in M_k^{\wk}(\Gamma,R')$, the
conditions on $S$ imply that the Fourier coefficients of $f$ at any
cusp are integral over $R'$. Write the Fourier expansion of $f$ at a
cusp $z$ of width $m$ as
\[
\tilde{f}_z = \sum_{n\in \Z} a_n(f,z) q^{n/m}.
\]

\begin{defn}
  Let $f\in M_k^{\wk}(\Gamma,R')$. We say that $f$ is \emph{weakly
    exact} if, at each cusp $z$ of $\Gamma$, and for each $n<0$,
  $n^{-1}a_n(f,z)$ is integral over $R'$.  We write
  $M_k^{\wx}(\Gamma,R')$ for the $R'$-module of weakly exact modular
  forms and $S_k^{\wx}(\Gamma,R')$ for the submodule of weakly exact
  cusp forms.
\end{defn}
It is clear that $\partial^{k-1}(M_{2-k}^{\wk}(\Gamma,R') \subset
S_k^{\wx}(\Gamma,R')$.
\begin{defn}
  Define for $k\ge 2$
  \[
  DR(\Gamma,R',k)=\frac{ S^{\wx}_k(\Gamma,R')}{\partial^{k-1}(
    M^{\wk}_{2-k}(\Gamma,R'))}
  \]
  and
  \[
  DR^*(\Gamma,R',k)=\frac{ M^{\wx}_k(\Gamma,R')}{\partial^{k-1}(
    M^{\wk}_{2-k}(\Gamma,R'))}
  \]
\end{defn}
If $R'\supset\Q$ this obviously agrees with our earlier definition.

In \S\ref{sec:rev}, \S\ref{sec:2nd}, and \S\ref{sec:qexp} we will prove that these groups enjoy the following properties.
\begin{itemize}
\item The $R$-modules $DR(\Gamma,R,k)$ and $DR^*(\Gamma,R,k)$ are
  locally free, and for every $R'\supset R$ we have
  \[
  DR(\Gamma,R',k)=DR(\Gamma,R,k)\otimes_R R',\quad 
  DR^*(\Gamma,R',k)=DR^*(\Gamma,R,k)\otimes_R R'
  \]
\item There exists for each $k\ge 2$ a commutative diagram with exact rows
  \[
  \begin{CD}
    0 @>>> S_k(\Gamma,R) @>>> DR(\Gamma,R,k) @>>> S_k(\Gamma,R)^\vee
    @>>> 0
    \\
    @. \bigcup && \bigcup && @| \\
    0 @>>> M_k(\Gamma,R) @>>> DR^*(\Gamma,R,k) @>>> S_k(\Gamma,R)^\vee
    @>>> 0
  \end{CD}
  \]
  in which all the inclusions are the natural ones.
\item Suppose that $p$ is prime, and that for some embedding
  $\Z_p\inject{}\C$, we have $R\subset\Z_p$. Then there are canonical
  compatible endomorphisms $\phi_p$ of $DR(\Gamma,\Z_p,k)$,
  $DR^*(\Gamma,\Z_p,k)$. The characteristic polynomial $H_p(T)$ of
  $\phi_p$ on $DR(\Gamma,\Z_p,k)$ has rational integer coefficients,
  and its roots are $p^{k-1}$-Weil numbers. Moreover
  \[
  H_p(T)=\text{(constant)}T^{2d_k}H_p(1/p^{k-1}T)
  \]
  where $d_k=\dim S_k(\Gamma)$.

  The characteristic polynomial of
  $\phi_p$ on $DR^*(\Gamma,\Z_p,k)/DR(\Gamma,\Z_p,k)$ has integer
  coefficients and its roots are of the form $p^{k-1}\times\text{(root
    of unity)}$.
\item Still assume that $R\subset\Z_p$. There is a unique $\gamma_p\in
  1+p\Z_p$ such that $\gamma_p^m=\delta^{p-1}$. Let $\tilde\phi_p$ be
  the endomorphism of $\Z_p((t))$ given by
  \[
  \tilde\phi_p\colon \sum a_nt^n \mapsto p^{k-1}\sum
  a_n\gamma_p^nt^{np}.
  \]
  Then the diagram
  \[
  \begin{CD}
    DR^*(\Gamma,\Z_p,k) @>>> \dfrac{\Z_p((t))}{\partial^{k-1}\left(\Z_p((t))\right)}
    \\
    @V{\phi_p}VV @VV{\tilde\phi_p}V \\
    DR^*(\Gamma,\Z_p,k) @>>> \dfrac{\Z_p((t))}{\partial^{k-1}\left(\Z_p((t))\right)}
  \end{CD}
  \]
  commutes.
\item Write $\langle k-1\rangle=\inf\{\ord_p(p^j/j!) \mid j\ge k-1\}$,
  and let
  \begin{align*}
    DR^*(\Gamma,\Z_p,k)^{(p)} &=  M_k(\Gamma,\Z_p)+p^{\langle k-1\rangle}
    DR^*(\Gamma,\Z_p,k)\\ &\subset \frac{ M^{\wx}_k(\Gamma,R')}{p^{\langle k-1\rangle}
      \partial^{k-1}(M^{\wk}_{2-k}(\Gamma,R'))}
  \end{align*}
  Then $\phi_p$ preserves $DR^*(\Gamma,\Z_p,k)^{(p)}$ and the diagram
  \[
  \begin{CD}
    DR^*(\Gamma,\Z_p,k)^{(p)} @>>> \dfrac{\Z_p((t))}{p^{\langle k-1\rangle}
\partial^{k-1}\left(\Z_p((t))\right)}
    \\
    @V{\phi_p}VV @VV{\tilde\phi_p}V \\
    DR^*(\Gamma,\Z_p,k)^{(p)} @>>> \dfrac{\Z_p((t))}{p^{\langle k-1\rangle}
\partial^{k-1}\left(\Z_p((t))\right)}
  \end{CD}
  \]
  commutes.
\end{itemize}

\noindent
\emph{Congruences}

We continue to assume that $R\subset \Z_p$. Let $\frako=\frako_F$ for
a finite extension $F/\Q_p$. Extend $\phi_p$ to a $\frako$-linear
endomorphism of $DR^*(\Gamma,\frako,k)$. Let $f\in
M_k^{\wx}(\Gamma,\frako)$, with Fourier expansion at infinity
\[
\tilde f=\sum_{n\in Z}a(n) q^{n/m} = \sum_{n\in\Z} b(n) t^n,\quad b(n)\in\frako.
\]
Let $H=\sum_{j=0}^rA_jT^j\in \frako[T]$ such that the image of $f$ in
$DR^*(\Gamma,\frako,k)$ is annihilated by $H(\phi_p)$. 

\begin{thm}\label{thm:long}
  (i) The coefficients $a(n)$ satisfy the congruences: if $n\in\Z$ and
  $p^s|n$ then
  \[
  \sum_{j=0}^r p^{(k-1)j}A_ja(n/p^j) \equiv 0 \pmod{p^{(k-1)s}}.
  \]
  (ii) If moreover $f\in M_k(\Gamma,\frako)$ then these congruences hold
  mod $p^{(k-1)s+{\langle k-1\rangle}}$.
\end{thm}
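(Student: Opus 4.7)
The plan is to read the congruence directly off the commutative diagrams from \S\ref{sec:summary} which compare $\phi_p$ on $DR^*(\Gamma,\frako,k)$ with the explicit operator $\tilde\phi_p$ acting on $\frako((t))$. Since $H(\phi_p)$ annihilates the class of $f$ in $DR^*$, the first such diagram forces $H(\tilde\phi_p)(\tilde f)$ to lie in $\partial^{k-1}\!\left(\frako((t))\right)$, so part (i) reduces to two concrete calculations: computing $H(\tilde\phi_p)(\tilde f)$ coefficient by coefficient, and translating the membership in $\partial^{k-1}\!\left(\frako((t))\right)$ into a divisibility statement on those coefficients.

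For the first calculation I would pass from $t$-expansions to $q$-expansions. From $q=\delta t^m$ we have $q^{n/m}=\delta^{n/m}t^{n}$, and the normalization $\gamma_p^m=\delta^{p-1}$ means $\gamma_p=\delta^{(p-1)/m}$ for the compatible choice of $m$-th root. Substituting into $\tilde\phi_p(t^n)=p^{k-1}\gamma_p^n t^{np}$ and converting back, the factors $\delta^{n/m}$ and $\gamma_p^n$ cancel exactly, giving
\[
\tilde\phi_p(q^{n/m}) = p^{k-1}q^{np/m},\qquad
\tilde\phi_p^{\,j}(q^{n/m}) = p^{(k-1)j}q^{np^j/m}.
\]
Hence the coefficient of $q^{n/m}$ in $H(\tilde\phi_p)(\tilde f)$ is exactly $\sum_{j=0}^{r}p^{(k-1)j}A_j\,a(n/p^j)$, with $a(\cdot)$ taken to vanish at non-integral arguments. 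For the second calculation, $\partial=q\,d/dq$ acts on $q^{n/m}$ by multiplication by $n/m$, so membership in $\partial^{k-1}\!\left(\frako((t))\right)$ is equivalent to each coefficient at $q^{n/m}$ being divisible by $(n/m)^{k-1}$ (and vanishing at $n=0$); since $m\in R^*$ is a $p$-adic unit, this is precisely divisibility by $p^{(k-1)\ord_p(n)}$, which yields part (i).

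Part (ii) follows by running the same argument on the refined diagram with $DR^*(\Gamma,\Z_p,k)^{(p)}$ and target $\frako((t))\big/p^{\langle k-1\rangle}\partial^{k-1}\!\left(\frako((t))\right)$: since $f\in M_k(\Gamma,\frako)$ lies in $DR^*(\Gamma,\frako,k)^{(p)}$ and $\phi_p$ stabilises this submodule, the relation $H(\phi_p)f=0$ still holds there, so $H(\tilde\phi_p)(\tilde f)\in p^{\langle k-1\rangle}\partial^{k-1}\!\left(\frako((t))\right)$ and every congruence gains the factor $p^{\langle k-1\rangle}$. The main content of Theorem \ref{thm:long} is therefore already packaged in the structural results of the summary; the only identity specific to this theorem is $\tilde\phi_p(q^{n/m})=p^{k-1}q^{np/m}$, which — rather than being a genuine obstacle — is precisely the reason the parameter $\gamma_p$ was normalized by $\gamma_p^m=\delta^{p-1}$ in the first place.
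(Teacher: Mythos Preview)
Your proposal is correct and follows essentially the same approach as the paper: deduce from the commutative diagrams that $H(\tilde\phi_p)(\tilde f)\in\partial^{k-1}(\frako((t)))$ (respectively $p^{\langle k-1\rangle}\partial^{k-1}(\frako((t)))$ for part (ii)), then read off the congruences coefficient by coefficient. The paper is terser and works directly with the $t$-expansion and the $\gamma_p$ factors rather than passing to $q^{n/m}$, but your identity $\tilde\phi_p(q^{n/m})=p^{k-1}q^{np/m}$ is exactly the simplification the paper has in mind when it says the left-hand side of the congruence is to be interpreted via the $b(n)$-formula with the $\gamma_p$ correction.
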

Here the left hand side is interpreted as
\[
\delta^{-n/m}\sum_{j=0}^r p^{(k-1)j}A_j\gamma_p^{n(p^j-1)/(p-1)}
b(n/p^j) \in \delta^{-n/m}
\]
which is the product of a unit an an element of $\frako$, and we adopt
the usual convention that $a(n)=b(n)=0$ is $n\notin\Z$
(cf.~\cite[Thm. 5.4]{SASD}). Part (ii) is one of the main results of \cite{SASD}.
\begin{proof}
  The properties above show that 
  \[
  \sum c_nt^n \defeq H(\tilde\phi)(\tilde f)\in \partial^{k-1}\left(\frako((t))\right)
  \]
  or equivalently that for every $n\in\Z$, $c_n \in
  n^{k-1}\frako$. Applying $H(\tilde\phi)$ to $\tilde f$ term-by-term,
  one obtains the congruences (i). If $f\in M_k(\Gamma,\frako)$ then 
  $H(\tilde\phi)(\tilde f)\in  p^{\langle
    k-1\rangle}\Im(\partial^{k-1})$, giving the stronger congruences (ii).
\end{proof}

\section{First examples}\label{sec:ex}

Under the hypotheses of Theorem \ref{thm:ASD}, suppose that
$\dim S_k(X\otimes\Q)=1$ and that $f\in S^{\wx}_k(X)$. Then the
characteristic polynomial of $\phi_p$ on $DR(X\otimes\Z_p,k)$ is of
the form
\[
H_p(T)=T^2-A_pT+p^{k-1},\quad A_p\in\Z
\]
The congruences \eqref{eq:ASDcong} then take the form
\begin{equation}
  \label{eq:3term}
  a(np) \equiv A_p a(n) - p^{k-1}a(n/p)\mod{p^{(k-1)s}}\quad\text{if $p^s|n$}
\end{equation}
Consider the weak cusp form of level one and weight 12
\begin{align*}
f&=E_4(z)^6/\Delta(z) - 1464 E_4(z)^3 .
\end{align*}
We cannot directly apply the theorem to $f$, since the modular curve
of level $1$ does belong to the class of $X$ considered in
\S\ref{sec:rev}. We can get round this in the usual way (cf.~part (b)
proof of \cite[5.2]{SASD}): take $X=X'=X(3)$ for some auxiliary
integer $N\ge 3$, and define $DR(X(1)\otimes\Z[1/6],k) =
DR(X(3),k)^{GL(2,\Z/3\Z)}$, which is then a free $Z[1/6]$-module of
rank 2. For each $p>3$, $DR(X(1)\otimes\Z_p,12)$ is annihilated by
$H_p(\phi) =\phi^2-\tau(p)\phi+p^{11}$, and one recovers, for
$p\ge11$, the congruences of the introduction. (With more care we
could get congruences for small primes as well.)  We also note that
for this example, and others on congruence subgroups, one could
replace the operator $H_p(\phi)$ with $T_p-\tau(p)$ where $T_p$ is the
Hecke operator acting on $DR(X(1)\otimes\Z_p,12)$ (defined using
correspondences in the usual way) and thereby avoid recourse to
crystalline theory.

As a further example, consider the following (weakly) modular forms of
weight 3 for noncongrence subgroup $\Phi_0(3)$ (defined in
\S\ref{sec:Fermat} below):
\begin{align*}
f_1(\tau)&= \eta(\tau/2)^\frac{4}{3}\eta(\tau)^{-2}\eta(2\tau)^\frac{20}{3}\\&=\sum c_1(n)q^\frac{n}{2}=q^{\frac{1}{2}}-\frac{4}{3}q^\frac{2}{2}+\frac{8}{9} q^\frac{3}{2}-\frac{176}{81} q^{\frac{4}{2}}+\cdots \in S_3(\Phi_0(3)),\\
\\
f_2(\tau)&= \eta(\tau/2)^\frac{20}{3}\eta(\tau)^{-10}\eta(2\tau)^\frac{28}{3}\\&=\sum c_2(n)q^\frac{n}{2}=q^\frac{1}{2}-\frac{20}{3}q^\frac{2}{2}+\frac{200}{9} q^\frac{3}{2}-\frac{4720}{81} q^\frac{4}{2}+ \cdots \in S^{\wk}_3(\Phi_0(3)).\\
\end{align*}
(Although $f_2$ is holomorphic at $\infty$, there is another cusp at
which it has a pole.)  From Corollary \ref{cor:two} it follows that
for a prime $p \equiv 2 \bmod{3}$, there exist $\alpha_p,\beta_p \in
\Z_p$ such that if $p^s|n$ then
\begin{align*}
c_1(pn) &\equiv \alpha_p c_2(n) \bmod{p^{2(s+1)}},\\
c_2(pn) &\equiv \beta_p c_1(n) \bmod{p^{2(s+1)}}.
\end{align*}
Moreover $\alpha_p \beta_p = p^2$, and $\ord_p(\alpha_p)=2$.

If $p \equiv 1 \bmod{3}$, then for some $\alpha_p \in \Z_p$ ($\ord_p(\alpha_p)=2$)
\begin{align*}
c_1(pn) &\equiv \frac{p^2}{\alpha_p} c_1(n) \bmod{p^{2(s+1)}},\\
c_2(pn) &\equiv \alpha_p c_2(n) \bmod{p^{2(s+1)}}.
\end{align*}
For any $p>3$ we have 
\[
c_2(pn)-A_p c_2(n)+\chi_3(p)p^2c_2(n/p) \equiv 0\bmod{p^{2s}}
\quad\text{if $p^s|n$},
\]
where $A_p$ is the $p$-th Fourier coefficient of a certain CM newform
in $S_3(\Gamma_1(12))$, and $\chi_3$ is Dirichlet character of
conductor 3 (and $H_p(T)=T^2-A_pT+\chi_3(p)p^2$).

\section{Review of  \cite{SASD}}
\label{sec:rev}

Let $R$ be a field or Dedekind domain of characteristic zero.
In this section we will work with modular curves over $R$. Let  $X$
be a smooth projective curve over $R$, whose fibres need not be
geometrically connected, equipped with a finite morphism $g \colon
X \to X'$, whose target $X'$ is a modular curve for a representatable moduli
problem. In practice we have in mind for $X'$ the basechange from
$\Z[1/N]$ to $R$ of one of the following
curves:
\begin{itemize}
\item[(i)] $X_1(N)$ (for some $N\ge 5$), the modular curve over $\Z[1/N]$
  parameterising (generalised) elliptic curves with a section of order
  $N$;
\item[(ii)] $X(N)$ (for some $N\ge 3$), parameterising elliptic
  curves with a full level $N$ structure $\alpha\colon (\Z/N)^2 \to
  E$,
\item[(iii)] $X(N)^{\arith}$ (for some $N\ge 3$), parameterising
  elliptic curves with ``arithmetic level $N$ structure of determinant
  one'' $\alpha \colon \Z/N\times\mmu_N \to E$
\end{itemize}
and we will limit ourselves to these cases, although most things
should work if $X'$ is replaced by some other modular curve (perhaps
for an ``exotic'' moduli problem).

We let $Y'\subset X'$ be the open subset parameterising true elliptic
curves, and $Z'\subset X'$ the complementary reduced closed subscheme
(the cuspidal subscheme).  We make the following hypotheses
on the morphism $g$:
\begin{quotation}
  (A) $g \colon X \to X'$ is \'etale over $Y'$
  \\
  (B) $\Gamma(X,\calO_X)=K$ is a field.
\end{quotation}
We write $Y$, $Z$ for the (reduced) inverse images of $Y'$, $Z'$ in
$X$, and $j\colon Y \inject{} X$ for the inclusion.

A \emph{cusp} is a connected component $z\subset Z$. The hypotheses
imply (by Abhyankar's lemma) that $g$ is tamely ramified along
$Z'$. We have $z=\Spec R_z$, where $R_z/R$ is finite and \'etale. One
knows that a formal uniformising parameter along a cusp of $X'$ may be
taken to be $q^{1/m}$ for some $m|N$, and we may choose therefore a
parameter $t_z\in \widehat{\calO_{X,z}}$ such that
$\delta_zt_z^{m_z}=q$ for some $m_z\ge 1$, $\delta_z\in
R_z^*$. Moreover $m_z$ (the \emph{width} of the cusp $z$) is
invertible in $R$.

Because $Y'$ represents a moduli problem, there is a universal
elliptic curve $\pi\colon E' \to Y'$, which in each of the cases
(i--iii) extends to a stable curve of genus one $\bar\pi\colon \bar E'
\to X'$, with a section $e\colon X'\to \bar E'$ extending the zero
section of $E'$. We let $\sheafom_{X'} = e^*\Omega^2_{\bar E'/X'}$ be
the cotangent bundle along $e$, and $\sheafom_X$ its pullback to $X$.

If $U$ is any $R$-scheme we shall simply write $\Omega^1_U$ for the
module of relative differentials $\Omega^!_{U/R}$. 

The module of ($R$-valued) modular forms of weight $k\ge 0$ on $X$ is by definition
\[
M_k(X) = H^0(X,\sheafom_X^{\otimes k}).
\]
There is a well-known canonical ``Kodaira--Spencer'' isomorphism
\[
KS(X') \colon \sheafom_{X'}^{\otimes 2} \isomarrow \Omega^1_{X'}(\log Z').
\]
Hypothesis (A) implies that $g^*\Omega^1_{X'}(\log Z')
=\Omega^1_X(\log Y)$, and therefore $KS(X')$ pulls back to give an isomorphism
\[
KS(X) \colon  \sheafom_{X}^{\otimes 2} \isomarrow \Omega^1_{X}(\log Z).
\]
One therefore has
\[
M_k(X) = H^0(X,\sheafom_X^{\otimes k-2}\otimes\Omega^1_{X}(\log Z))
\]
and the submodule of cusp forms is
\[
S_k(X) = H^0(X,\sheafom_X^{\otimes k-2}\otimes\Omega^1_{X}).
\]
Serre duality then gives a canonical isomorphism of free $R$-modules
\[
S_k(X)^\vee \isomarrow H^1(X,\sheafom_X^{\otimes 2-k}).
\]
The relative de Rham cohomology of the family $E' \to Y'$ is
a rank 2 locally free sheaf $\cE_{Y'}=R^1\pi_*\Omega^*_{E'/Y'}$, which
carries an integrable connection $\nabla$. Denote by $\sheafom_Y$,
$\cE_Y$ the pullbacks of $\sheafom_{Y'}$, $\cE_{Y'}$ to $Y$.

There is a canonical extension (in the sense of \cite{DeED}) of
$(\cE_{Y'},\nabla)$ to a locally free sheaf $\cE_{X'}$ with
logarithmic connection
\[
\nabla\colon \cE_{X'} \to \cE_{X'} \otimes \Omega^1_{X'}(\log Z')
\]
whose residue map $\Res_\nabla$ --- defined by the commutativity of the square
\[
\begin{CD}
\cE_{X'} @>{\nabla}>> \cE_{X'}\otimes\Omega^1_{X'}(\log Z') \\
@V{(-)\otimes 1}VV @VV{id\otimes\Res_{Z'}}V \\
\cE_{X'}\otimes \cO_{Z'} @>{\Res_\nabla}>> \cE_{X'}\otimes \cO_{Z'} 
\end{CD}
\]
--- is nilpotent. The canonical extension may be described explicitly
using the Tate curve: in the cases (i--iii), each cusp $z\subset Z'$
is the spectrum of a cyclotomic extension $R'=R[\zeta_M]$ (for some
$M|N$ depending on $z$). The basechange of $E'$ to $R'((q^{1/m}))$ via the $q$-expansion
map is canonically isomorphic to the pullback of the Tate curve
$\tate(q)/\Z[1/N]((q^{1/m}))$, and there is a canonical basis
\begin{gather*}
H^1_{\dr}(\tate(q)/\Z[1/N]((q^{1/m}))) = \Z[1/N]((q^{1/m}))\cdot\ocan \oplus
\Z[1/N]((q^{1/m}))\cdot\xcan
\\
\nabla(\ocan)=\xcan\otimes dq/q,\quad \nabla(\xcan)=0
\end{gather*}
for the de Rham cohomology of the Tate curve. The canonical extension
of $\cE_{Y'}$ to $X'$ is then the unique extension for which, at each
cusp $z$ as above, $\widehat\cE_{X',x}$ is generated by $\ocan$ and
$\xcan$.  In particular, in the basis $(\ocan,\xcan)$ the residue map at
a cusp $z$ of width $m$ has matrix
\[
\Res_{\nabla,z}=
\begin{pmatrix}0 & 0\\ m & 0
\end{pmatrix}.
\]
We write $\sheafom_X$, $\cE_X$ for the pullbacks of
$\sheafom_{X'}$, $\cE_{X'}$ to $X$.  Since the residues are nilpotent,
$\cE_X$ is equal to the canonical extension of $\cE_Y$.

The Hodge filtration of $\cE_{Y'}$ extends to give a short exact
sequence
\[
\begin{CD}
0 @>>> F^1\cE_X = gr^1_F\cE_X = \sheafom @>>> F^0 = \cE_X @>>> \sheafom^\vee @>>> 0
\end{CD}
\]
and the Kodaira-Spencer map is obtained (by tensoring with $\sheafom$) from
the composite
\[
\sheafom_X \inject{} \cE_X \mapright{\nabla} \cE_X\otimes\Omega^1_{X}(\log Z) \to
\sheafom_X^\vee \otimes\Omega^1_{X}(\log Z)
\]
In \cite{SASD}, some de Rham cohomology groups associated to modular
forms were defined. Define, for an integer $k\ge2$,
\begin{gather*}
  \Omega^0(\cE_X^{(k-2)})=\cE_X^{(k-2)}\defeq\Sym^{k-2}\cE_X,
  \\
  \Omega^1(\cE_X^{(k-2)})\defeq
  \nabla(\cE_X^{(k-2)})+\cE_X^{(k-2)}\otimes\Omega^1_{X}
  \subset \cE_X^{(k-2)}\otimes \Omega^1_{X}(\log Z)
\end{gather*}
and let 
\[
\nabla^{(k-2)} \colon \Omega^0(\cE_X^{(k-2)}) \to \Omega^1(\cE_X^{(k-2)})
\]
be the $(k-2)$-th symmetric power of the connection $\nabla$.
This makes $\Omega^\bullet(\cE_X^{(k-2)})$ into a complex of locally free
$\cO_X$-modules with $R$-linear maps.  Define
\begin{align}
DR(Y,k)&\defeq H^1(X,\cE_X^{(k-2)}\otimes\Omega_{X}^*(\log Z)),\notag \\
DR(X,k)&\defeq H^1(X,\Omega^\bullet(\cE_X^{(k-2)}))
\end{align}
In the notation of \S2 of \cite{SASD}, $DR(X,k)=L_{k-2}(X,R)$ and $DR(Y,k) =
T_{k-2}(X,R)$. 

The Hodge filtration on $\cE_X^{(k-2)}$ is the symmetric power of the
Hodge filtration $F^\bullet$ on $\cE_X$: its associated graded is
\[
\gr^j_F\cE_X^{(k-2)} = \begin{cases}
  \sheafom_X^{\otimes(k-2-2j)} & \text{if $0\le j\le k-2$}\\
  0&\text{otherwise}
\end{cases}.
\]
Define the filtration $F^\bullet$ on the complex
$\cE_X^{(k-2)}\otimes\Omega_X^\bullet(\log Z)$ by
\[
F^j(\cE_X^{(k-2)}\otimes\Omega_X^i(\log Z)) =
F^{j-i}(\cE_X^{(k-2)})\otimes\Omega_X^i(\log Z).
\]
Then the connection $\nabla^{(k-2)}$ respects $F^\bullet$. On the
associated graded, $\nabla^{(k-2)}$ is $\cO_X$-linear, and if $(k-2)!$
is invertible in $R$, away from the extreme degrees it is an isomorphism:
\begin{gather*}
  \gr_F^0(\cE_Y^{(k-2)}\otimes\Omega_X^\bullet(\log Z)) =
  \sheafom_X^{\otimes 2-k}
  \\
  \gr_F^{k-1}(\cE_Y^{(k-2)}\otimes\Omega_X^\bullet(\log Z)) =
  \sheafom_X^{\otimes k-2}\otimes\Omega^1_X(\log Z)[-1]
  \\
  \gr^j_F\nabla^{(k-2)} \colon \gr^j_F\cE_X^{(k-2)} \isomarrow
  \gr^{j-1}_F\cE_X^{(k-2)}\otimes \Omega_X^1(\log Z)
  \qquad \text{if $0<j< k-1$}
\end{gather*}
In fact, $\gr^j_F\nabla^{(k-2)} = j(KS\otimes id_{\sheafom^{\otimes
    k-2j}})$ if $0<j<k-1$. Therefore from the spectral sequences for
the cohomology of the filtered complexes
\[
(\cE_Y^{(k-2)}\otimes\Omega_X^\bullet(\log Z),
F^\bullet)\quad\text{and}\quad
(\Omega^\bullet(\cE_X^{(k-2)}), F^\bullet)
\]
we obtain a commutative diagram with exact rows 
\[
\begin{CD}
  0 @>>> S_k(X) @>>> DR(X,k) @>>> S_k(X)^\vee @>>> 0 \\
  @. @V{\subset}VV @VVV @| \\
  0 @>>> M_k(X) @>>> DR(Y,k) @>>> S_k(X)^\vee @>>> 0 
\end{CD}
\]
and 
\[
H^j(X,\Omega^\bullet(\cE_X^{(k-2)}))= 
H^j(X,\cE_X^{(k-2)}\otimes\Omega_{X}^\bullet(\log Z))=0\quad
\text{if $j\ne 1$, $k>0$.}
\]
More precisely, there are isomorphisms in the derived category
\begin{align}
\label{eq:complexes}
  \cE_Y^{(k-2)}\otimes\Omega_X^\bullet(\log Z) &= \bigl[\ 
    \sheafom_X^{\otimes 2-k} \mapright{\calD^{k-1}}
    \sheafom_X^{\otimes k-2}\otimes\Omega^1_X(\log Z)
  \ \bigr ]
  \\
  \Omega^\bullet(\cE_X^{(k-2)}) &= \bigl[ \ 
    \sheafom_X^{\otimes 2-k} \mapright{\calD^{k-1}}
    \sheafom_X^{\otimes k-2}\otimes\Omega^1_X
  \ \bigr ]
\end{align}
where $\calD^{k-1}$ is a differential operator which is characterised
by its effect on $q$-expansion:
\[
\calD^{k-1}(f\,\ocan^{2-k}) = \frac{(-1)^k}{(k-2)!}\Bigl(d\frac
  d{dq}\Bigr)^{\!\!k-1}\!\!(f)\,\ocan^{k-2}\otimes \frac{dq}q
\]
(see \cite[proof of 2.7(ii)]{SASD}).

Finally note that from the exact sequence of complexes
\[
0 \mapright{}   \Omega^\bullet(\cE_X^{(k-2)})  \mapright{}
\cE_X^{(k-2)}\otimes\Omega_X^\bullet(\log Z)
\mapright{\Res_Z} \sheafom_X^{\otimes k-2}\otimes\calO_Z 
\mapright{} 0
\] 
we obtain an exact sequence
\[
0 \mapright{} DR(X,k) \mapright{} DR(Y,k) \mapright{\Res} 
\Gamma(Z,\sheafom_X^{\otimes k-2}\otimes\calO_Z)
\mapright{} 0
\]
\section{Modular forms of the second and third kind}
\label{sec:2nd}
For any $k\in\Z$, and any $R$, define
\[
M^{\wk}_k(X)\defeq \Gamma(Y,\sheafom_Y^k),
\] 
the $R$-module of weakly (or meromorphic) modular forms of weight $k$
on $X$. We say that an element of $M^{\wk}_k(X)$ is a weak cusp
form if, at each cusp, its $q$-expansion has vanishing constant
term. Let $S^{\wk}_k(X)\subset M^{\wk}_k(X)$ denote the submodule of
weak cusp forms.

Composing $\calD^{k-1}$ with the Kodaira-Spencer isomorphism we obtain
a $R$-linear map
\[
\theta^{k-1}\colon M^{\wk}_{2-k}(X) \to M^{\wk}_k(X)
\]
which on $q$-expansions is given by $(q\,d/dq)^{k-1}$, and whose image
is contained in $S_k^{\wk}(X)$.

Suppose $R=K$ is a field. Then one knows (cf. \cite{DeED}) that the
restriction map
\[
H^*(X,\cE_X^{(k-2)}\otimes\Omega_X^\bullet(\log Z))
\to
H^*(Y,\cE_Y^{(k-2)}\otimes\Omega_Y^\bullet)
\]
is an isomorphism, and since $Y$ is affine, the cohomology group on
the right can be computed as the cohomology of the complex of groups
of global sections.

We therefore have the following description of the de Rham cohomology
groups as ``forms of the second and third kind'':
\begin{thm}
\label{thm:2ndkindQ}
If $R$ is a field, there exist canonical isomorphisms
\[
DR(Y,k) = \frac {M^{\wk}_k(X)} {\theta^{k-1}(M^{\wk}_{2-k}(X))}
,\qquad
DR(X,k) = \frac {S^{\wk}_k(X)} {\theta^{k-1}(M^{\wk}_{2-k}(X))}
\]
compatible with the inclusions on both sides. The Hodge filtrations on
$DR(Y,k)$ and $DR(X,k)$ are induced by the inclusion $M_k(X)\subset
M_k^{\wk}(X)$.
\end{thm}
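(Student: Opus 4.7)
The plan is to reduce both identifications to the two-term complexes provided by the quasi-isomorphisms \eqref{eq:complexes}, and then to compute hypercohomology on the affine open $Y$.

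For $DR(Y,k)$, I would start from the fact recalled just before the theorem that, over a field, the restriction
\[
H^*(X,\cE_X^{(k-2)}\otimes\Omega_X^\bullet(\log Z)) \isomarrow H^*(Y,\cE_Y^{(k-2)}\otimes\Omega_Y^\bullet)
\]
is an isomorphism, so $DR(Y,k)$ can be computed on $Y$. Applying the two-term presentation of \eqref{eq:complexes}, and using the Kodaira--Spencer isomorphism $\sheafom_Y^{\otimes 2}\isomarrow\Omega^1_Y$ (an equality rather than a log isomorphism since $Z$ is disjoint from $Y$), the restricted complex reads
\[
\sheafom_Y^{\otimes 2-k}\mapright{\calD^{k-1}}\sheafom_Y^{\otimes k}.
\]
Since $Y$ is affine, $H^1$ of this complex is simply the cokernel of the map on global sections. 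By definition these global sections are $M^{\wk}_{2-k}(X)$ and $M^{\wk}_k(X)$, and the $q$-expansion characterisation of $\calD^{k-1}$ recorded in \S\ref{sec:rev} identifies the differential, up to the unit $(-1)^k/(k-2)!$, with $\theta^{k-1}$. This yields the first isomorphism.

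For $DR(X,k)$ I would exploit the residue exact sequence
\[
0 \to DR(X,k) \to DR(Y,k) \mapright{\Res} \Gamma(Z,\sheafom_X^{\otimes k-2}\otimes\cO_Z) \to 0
\]
established at the end of \S\ref{sec:rev}. Under the identification of $DR(Y,k)$ just obtained, the residue of a class represented by $f\in M^{\wk}_k(X)$ at a cusp $z$ is, up to a unit depending on the width $m_z$, the constant term of the $q$-expansion of $f$ at $z$: this is because the Kodaira--Spencer map sends $\ocan^{\otimes 2}$ to $dq/q$ times a width factor. Hence the kernel of $\Res$ consists precisely of classes of weak cusp forms, and since $\theta^{k-1}(M^{\wk}_{2-k}(X))\subset S^{\wk}_k(X)$ already, the kernel equals $S^{\wk}_k(X)/\theta^{k-1}(M^{\wk}_{2-k}(X))$.

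The assertion about Hodge filtrations drops out of the same computation: the piece $F^{k-1}$ of each $H^1$ comes from the subcomplex concentrated in degree one, namely $\sheafom_X^{\otimes k-2}\otimes\Omega^1_X(\log Z)[-1]$ (respectively $\sheafom_X^{\otimes k-2}\otimes\Omega^1_X[-1]$), whose global sections are $M_k(X)$ (respectively $S_k(X)$) sitting inside the weakly modular forms. The only delicate points will be the identification of $\calD^{k-1}$ with $\theta^{k-1}$ and of $\Res$ with the cuspidal constant term; both reduce to unwinding the Tate-curve trivialisation of $(\cE_X,\nabla)$ near each cusp recalled in \S\ref{sec:rev}, and should amount to careful bookkeeping of signs and width factors rather than any conceptual difficulty.
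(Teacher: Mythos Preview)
Your proposal is correct and follows essentially the same approach as the paper: the identification of $DR(Y,k)$ via restriction to the affine $Y$ and computation of hypercohomology as global sections of the two-term complex \eqref{eq:complexes} is exactly what the paper sketches in the paragraph preceding the theorem, and your use of the residue exact sequence from the end of \S\ref{sec:rev} to deduce the $DR(X,k)$ case (together with the Hodge filtration statement) fills in precisely the details the paper leaves implicit. The only points you flag as delicate---matching $\calD^{k-1}$ with $\theta^{k-1}$ up to the unit $(-1)^k/(k-2)!$, and matching $\Res$ with cuspidal constant terms---are indeed routine unwindings of the Tate-curve description recalled in \S\ref{sec:rev}.
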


\begin{rems}
(i) When $k=2$ we simply recover the classical formulae for the first de
Rham cohomology of a smooth affine curve $Y=X\setminus Z$ over a field
of characteristic zero:
\[
H^1_{\mathrm{dR}}(Y/K)=
\frac{\Gamma(Y,\Omega^1_Y)}{d\left(\Gamma(Y,\cO_Y)\right)}
\]
and for the  complete curve $X$
\[
H^1_{\mathrm{dR}}(X/K)=\frac{\{\text{forms of the 2nd kind on $X$,
    regular on $Y$}\}}{d\left(\Gamma(Y,\cO_Y)\right)}
\]
(ii) Suppose $K=\C$ and $Y(\C)=\Gamma\backslash\mathfrak{H}$ is a
classical modular curve. Then one has a natural isomorphism from
$DR(X,K)$ to Eichler--Shimura parabolic cohomology \cite{Shi} given by periods:
\[
f(z) \mapsto \left( \int_{z_0}^{\gamma(z_0)}  P(z,1) f(z) \, dz \right)_\gamma
\] 
for homogeneous $P\in \C[T_0,T_1]$ of degree $(k-2)$.
\end{rems}

For general $R$, the description given in the theorem needs to be
modified. Since the $R$-modules $DR(Y,k)$ and $DR(X,k)$ are locally
free, and their formation commutes with basechange, restriction to $Y$
induces an injective map
\begin{equation}
\label{eq:2ndmap}
DR(Y,k) \to \frac {M^{\wk}_k(X)} {\theta^{k-1}(M^{\wk}_{2-k}(X))}.
\end{equation}
For each cusp $z\subset Z$, let $R_z=\Gamma(z,\calO_z)$ and let
$t_z\in\widehat{\calO_{X,z}}$ be a uniformising parameter on $X$ along
$z$. Say that $f\in M^{\wk}_k(X)$ is \emph{weakly exact} if
for every cusp $z$, the principal part of $f$ at $z$ is in the image
of $\theta^{k-1}$. Explicitly, if the expansion of $f$ at $z$ is
$\sum a_n t_z^n\otimes \ocan^{\otimes k}$, the condition is that
$a_n\in n^{k-1}R_z$ for every $n< 0$. Let
\[
S^{\wx}(X) \subset M^{\wx}_k(X) \subset M^{\wk}_k(X)
\]
denote the submodules of weakly exact cusp and modular forms,
respectively.

If $g\in M^{\wk}_{2-k}(X)$ then evidently $\theta^{k-1}(g)$ is
weakly exact.

\begin{thm}
\label{thm:2ndkindZ}
For any $R$ the maps \eqref{eq:2ndmap} induce isomorphisms
\[
DR(X,k) = \frac {S^{\wx}_k(X)}{\theta^{k-1}(M^{\wk}_{2-k}(X))},\qquad
DR(X,k) = \frac {S^{\wx}_k(X)}{\theta^{k-1}(M^{\wk}_{2-k}(X))}.
\]
\end{thm}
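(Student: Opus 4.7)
The plan is to deduce this integral refinement of Theorem~\ref{thm:2ndkindQ} by combining the field-case result with a local analysis at each cusp. Let $K$ denote the fraction field of $R$. Since $DR(Y,k)$ and $DR(X,k)$ are locally free $R$-modules whose formation commutes with base change, the canonical maps $DR(Y,R,k) \inject{} DR(Y,K,k)$ and $DR(X,R,k) \inject{} DR(X,K,k)$ are injective. Composing with the field-case isomorphisms of Theorem~\ref{thm:2ndkindQ}, the injectivity of \eqref{eq:2ndmap} and its cusp-form analogue follows, so the remaining task is to identify the image precisely.

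Next I would show the image lies in the weakly exact subquotient by a local computation at each cusp $z$. Using the canonical extension basis $(\ocan, \xcan)$ of $\widehat\cE_{X,z}$, with $\nabla\ocan = \xcan \otimes dq/q$ and $\nabla\xcan = 0$, one writes down the complex $\cE_X^{(k-2)} \otimes \Omega^\bullet_X(\log Z)$ explicitly on the formal disk around $z$. A cohomology class in $DR(Y,R,k)$ is represented locally by an integral section of this complex; translating back to a meromorphic weight-$k$ form on $Y$ via the Kodaira--Spencer isomorphism and the explicit formula for $\calD^{k-1}$ on $q$-expansions, integrality in the canonical-extension basis forces the principal part of the associated Laurent expansion to lie in the image of $\theta^{k-1}$ applied to a local meromorphic form of weight $2-k$ --- precisely the weakly exact condition.

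For surjectivity, given $f \in M^{\wx}_k(X)$, weak exactness at each cusp $z$ provides a local meromorphic form $g_z$ of weight $2-k$ such that $\theta^{k-1}(g_z)$ has the same principal part as $f$ at $z$. The data $\{f,\ (g_z)_z\}$ then defines a \v{C}ech $1$-cocycle for $\cE_X^{(k-2)} \otimes \Omega^\bullet_X(\log Z)$ with respect to an open cover $X = Y \cup \bigcup_z U_z$ by small affine neighborhoods of the cusps, and its class in $DR(Y,R,k)$ maps to $[f]$. The cusp-form isomorphism then follows by combining this with the residue exact sequence at the end of Section~\ref{sec:rev}: the submodule $S^{\wx}_k(X) \subset M^{\wx}_k(X)$ is exactly the kernel of the constant-term-at-cusps map, matching the kernel of $\Res\colon DR(Y,R,k) \to \Gamma(Z,\sheafom_X^{\otimes k-2}\otimes\cO_Z)$, which is $DR(X,R,k)$.

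The main obstacle is the precise local bookkeeping at the cusps: one must verify that the integrality condition defining weak exactness --- ``the principal-part coefficient $a_n$ lies in $n^{k-1}R_z$ for $n<0$'' --- corresponds exactly to the condition for a local meromorphic weight-$k$ form to bound an integral element of the canonical extension under $\calD^{k-1}$. This reduces to unwinding the explicit formula $\calD^{k-1}(f\,\ocan^{2-k}) = \frac{(-1)^k}{(k-2)!}(q\,d/dq)^{k-1}(f)\,\ocan^{k-2}\otimes dq/q$ and confirming that no spurious denominators are introduced by the quasi-isomorphism \eqref{eq:complexes}. Once this local match is made precise, the \v{C}ech gluing and the residue exact sequence yield the theorem without further difficulty.
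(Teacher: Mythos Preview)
Your proposal is correct in outline, but the paper takes a more streamlined route. Rather than separating the argument into injectivity (via base change to the fraction field and Theorem~\ref{thm:2ndkindQ}), image containment (via a local analysis in the canonical-extension basis), and surjectivity (via a \v{C}ech cocycle on Zariski opens), the paper computes $DR(X,k)$ in one stroke using the faithfully flat affine covering $Y \coprod X_{/Z} \to X$, where $X_{/Z} = \Spec\widehat{\cO_{X,Z}}$ is the formal completion along $Z$ and $Y_{/Z}=X_{/Z}\setminus Z$. Applying the \v{C}ech complex for this cover directly to the two-term complex on the right of \eqref{eq:complexes} yields a small double complex whose $H^1$ is visibly the middle cohomology of
\[
M^{\wk}_{2-k}(X)\ \mapright{\theta^{k-1}}\ S^{\wk}_k(X)\ \mapright{\beta}\
\frac{\Gamma(Y_{/Z},\sheafom^k)}{\Gamma(X_{/Z},\sheafom^k)+\theta^{k-1}\Gamma(Y_{/Z},\sheafom^{2-k})},
\]
and $\ker(\beta)$ is by definition $S^{\wx}_k(X)$. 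This buys two things: it eliminates the separate appeal to the field case, and it sidesteps your ``main obstacle'' entirely, since by computing with \eqref{eq:complexes} rather than with $\cE_X^{(k-2)}\otimes\Omega^\bullet_X(\log Z)$ the weak-exactness condition drops out without ever unwinding the canonical-extension basis. Your approach would also go through, but the use of Zariski neighbourhoods $U_z$ in place of formal disks makes the principal-part bookkeeping slightly more delicate (you must check that the local potential $g_z$ can be realised as an honest section on $U_z\cap Y$, not merely a formal Laurent polynomial), and the detour through Theorem~\ref{thm:2ndkindQ} is unnecessary once the formal \v{C}ech cover is in hand.
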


\begin{proof}
  Let $X_{/Z}=\Spec \widehat{\calO_{X,Z}}$ denote the
  formal completion of $X$ along $Z$, and $Y_{/Z}=X_{/Z}-Z$ the
  complement; thus
  \[
  X_{/Z}=\coprod_z \Spec R_z[[t_z]] \supset Y_{/Z}=\coprod_z \Spec R_z((t_z)).
  \]
  Then $Y\coprod X_{/Z}$ is a faithfully flat affine covering of $X$,
  and so its Cech complex computes the cohomology of any complex of
  coherent $\calO_X$-modules with $R$-linear maps. Applying this to
  the complex \eqref{eq:complexes}, we see that $DR(X,k)$ is the
  $H^1$ of the double complex of $R$-modules:
  \[
  \begin{CD}
    \Gamma(Y_{/Z},\sheafom^{2-k}) @>{\theta^{k-1}}>> \Gamma(Y_{/Z},\sheafom^{k})
    \\
    @AAA @AAA
    \\
    M^{\wk}_{2-k}(X) \oplus \Gamma(X_{/Z},\sheafom^{2-k})  @>{\theta^{k-1}}>>
    S^{\wk}_k(X) \oplus \Gamma(X_{/Z},\sheafom^{k})
  \end{CD}
  \]
  or equivalently the $H^1$ of the complex
  \[
  M^{\wk}_k(X) \mapright{\theta^{k-1}}
  S^{\wk}_k(X) \mapright{\beta}
  \frac{\Gamma(Y_{/Z},\sheafom^{k})}{\Gamma(X_{/Z},\sheafom^{k})+
    \theta^{k-1}\Gamma(Y_{/Z},\sheafom^{2-k})}
  \]
  and $S^{\wx}_k(X)$ is precisely $\ker(\beta)$. Likewise for $DR(Y,k)$.
\end{proof}

\section{$q$-expansions and crystalline structure}
\label{sec:qexp}
Let $z\subset Z$ be a cusp, and write
\[
\partial = q\frac{d}{dq}=m_zt_z\frac{d}{dt_z},
\]
a derivation of $R_z((t_z))$. We have the local expansion maps
\[
\locz \colon DR(X,k) \to \frac{R_z[[t_z]]}{\partial^{k-1}(R_z[[t_z]])},\quad
DR(Y,k) \to \frac{R_z((t_z))}{\partial^{k-1}(R_z((t_z))}
\]
such that the restriction of $f\in DR(X,k)$ to the formal
neighbourhood of $z$ is $\locz(f)\otimes \ocan^{\otimes k}$.

Suppose now that $R=\mathfrak{o}_K$ for a finite unramified extension
$K/\Q_p$, and let $\sigma$ be the arithmetic Frobenius
automorphism of $K$. For each $z$, denote also by $\sigma$ the
Frobenius automorphism of $R_z$ (which is also an unramified extension of
$\Z_p$). By Hensel's lemma there is a unique $\gamma_z$ with
\[
\gamma_z\in 1+pR_z\quad\text{and}\quad \gamma_z^{m_z}=\delta_z^p/\sigma(\delta_z).
\]
The $\sigma$-linear endomorphism $q\mapsto q^p$ of $R((q))$ then
extends to a unique $\sigma$-linear endomorphism of $R_z((t_z))$ whose
reduction is Frobenius, given by
\[
t_z \mapsto \gamma_z t_z^p
\]
Then, as explained in \S3 of \cite{SASD}, there are compatible $\sigma$-linear
endomorphisms $\phi$ of $DR(X,k)$ and $DR(Y,k)$, with the property
that
\begin{equation}
\label{eq:frob}
\locz(f)=\sum a_nt_z^n
\quad\implies\quad
\locz(\phi(f)) = p^{k-1}\sum \sigma(a_n)\gamma_z^nt_z^{np}
\end{equation}
Let us assume that $R=\Z_p$, so that $\phi$ is now linear.  Let
$z\subset Z$ be a cusp with $R_z=\Z_p$. If $f\in M_k^{\wx}(X)$, write
the local expansion of $f$ at $z$ as
\begin{equation}
  \label{eq:exp}
  f = \tilde f \otimes\ocan^{\otimes k},\quad
  \tilde f = \sum b(n) t_z^n
  = \sum a(n) q^{n/m_z},\quad b(n)=\delta_z^{n/m_z}a(n)\in \Z_p.
\end{equation}
Suppose that $H(T)=\sum_{j=0}^rT^j\in \Z_p[T]$ satisfies $H(\phi)(f)=0$ in
$DR(Y,k)$. Then $\locz(H(\phi)f)=0$, which is equivalent to the following
congruences: if $p^s|n$ then
\begin{equation}
  \label{eq:ASDcong}
  \sum_{j=0}^r p^{(k-1)j}A_ja(n/p^j) \equiv 0 \mod{p^{(k-1)s}}.
\end{equation}
Here we follow the usual convention that $a(n)=b(n)=0$ for $n$ not an
integer, and the left hand side is interpreted as
\[
\delta_z^{-n/m_z}\sum_{j=0}^r p^{(k-1)j}A_j\gamma_p^{n(p^j-1)/(p-1)}
b(n/p^j) \in \delta_z^{-n/m_z}\Z_p
\]
cf.~\cite[Thm, 5.4]{SASD}. Putting this together we obtain the following
extension of the ASD congruences to weakly modular forms:
\begin{thm}
  \label{thm:ASD}
  Suppose that $R=\Z[1/M]$ and that $z$ is a cusp with $R_z=R$.
  Let $f\in M^{\wx}_k(X)$, with local expansion at $z$ \eqref{eq:exp}.
  Let $p$ be a prime not dividing $M$ with $p>k-2$, and suppose that 
  the image of $f$ in $DR(Y\otimes\Z_p,k)$ is annihilated by $H(\phi)$
  for some polynomial $H(T)=\sum_{j=0}^rA_jT^j\in \Z_p[T]$. then for
  every integer $n$ the congruences \eqref{eq:ASDcong} hold. 
\end{thm}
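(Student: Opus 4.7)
The plan is to push the hypothesis $H(\phi)(f)=0$ through the local expansion map $\locz$ at the cusp $z$ and then read off the congruences coefficient-by-coefficient, exactly as in the proof of Theorem~\ref{thm:long} (of which this is the geometric incarnation).

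First, by the compatibility between $\phi$ and its $q$-expansion version $\tilde\phi$ recorded in \eqref{eq:frob}, applying $\locz$ to the vanishing of $H(\phi)(f)$ in $DR(Y\otimes\Z_p,k)$ yields the relation
\[
H(\tilde\phi)(\tilde f)\ \in\ \partial^{k-1}\bigl(\Z_p((t_z))\bigr)
\]
inside $\Z_p((t_z))$. Here $\tilde\phi$ is the $\sigma$-linear endomorphism $\sum a_n t_z^n \mapsto p^{k-1}\sum \sigma(a_n)\gamma_z^n t_z^{np}$. Since $\partial$ acts on $t_z^n$ by multiplication by $n/m_z$ (with $m_z$ a $p$-adic unit because $m_z\mid N \mid M$ and $p\nmid M$), the image $\partial^{k-1}(\Z_p((t_z)))$ consists precisely of those series $\sum c_n t_z^n$ with $c_n\in n^{k-1}\Z_p$ for every $n$.

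Second, I expand $H(\tilde\phi)(\tilde f)=\sum C_n t_z^n$ term-by-term using \eqref{eq:frob}. A direct computation (with the convention $b(\cdot)=0$ off the integers) gives
\[
C_n = \sum_{j=0}^r p^{(k-1)j} A_j\, \gamma_z^{\,n(p^j-1)/(p-1)}\, b(n/p^j).
\]
The membership $C_n\in n^{k-1}\Z_p$ forces $C_n\in p^{(k-1)s}\Z_p$ whenever $p^s\mid n$. Finally, substituting $b(m)=\delta_z^{m/m_z} a(m)$ and dividing by the $p$-adic unit $\delta_z^{n/m_z}$ translates this divisibility into the asserted congruence \eqref{eq:ASDcong}.

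The only real obstacle is bookkeeping the various auxiliary constants: one must verify that $\gamma_z\in 1+p\Z_p$, $\delta_z\in R^*=\Z[1/M]^*$ and $m_z$ are all $p$-adic units, so that they can be cancelled without affecting valuations. The hypothesis $p>k-2$ plays no role in this last step itself, but it is essential upstream: it ensures $(k-2)!$ is invertible in $\Z_p$, which is what makes the quasi-isomorphism \eqref{eq:complexes} and hence the identification of $DR(Y\otimes\Z_p,k)$ with forms of the second kind (Theorem~\ref{thm:2ndkindZ}) — and with it the availability of the crystalline $\phi$ on the de Rham side — valid integrally at $p$.
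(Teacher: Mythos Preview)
Your proof is correct and follows essentially the same route as the paper: the argument there (given in the paragraph immediately preceding the theorem, and parallel to the proof of Theorem~\ref{thm:long}) is precisely to apply $\locz$ to $H(\phi)f$, observe that the resulting series lies in $\partial^{k-1}(\Z_p((t_z)))$, and read off the congruences term-by-term after the substitution $b(n)=\delta_z^{n/m_z}a(n)$. One small imprecision: your justification ``$m_z\mid N\mid M$'' is not quite what the setup guarantees, but the needed fact that $m_z\in\Z_p^\times$ follows directly from the standing hypothesis that $m_z$ is invertible in $R=\Z[1/M]$.
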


\section{Fermat groups and modular forms}
\label{sec:Fermat}
Modular function and modular forms on Fermat curves have been studied by 
D.~Rohrlich \cite{Roh} and T.~Yang \cite{TY}, among others. We follow
here the notation of \cite{TY}.

Let $\Delta$ be the free subgroup of $\SL_2(\Z)$ generated by
the matrices $A:=\sm 1 2 0 1$ and $B:=\sm 1 0 2 1$. One has that
$\Gamma(2)=\{\pm I\}\Delta$. Given a positive integer $N$, the
Fermat group $\Phi(N)$ is defined to be the subgroup of $\Delta$
generated by $A^N$, $B^N$, and the commutator $[\Delta,\Delta]$.
It is known that the modular curve $X(\Phi(N))$ is isomorphic to
the Fermat curve $X^N+Y^N=1$. The group $\Phi(N)$ is a
congruence group only if $N=1,2,4$ and $8$.

Let $N>1$ be an odd integer. Denote by $\Phi_0(N)$ the group generated
by $\Phi(N)$ and $A$. It is a subgroup of $\Delta$ of index $N$ and
genus zero.  (The other two genus zero index $N$ subgroups of $\Delta$
that contain $\Phi(N)$ are generated by $\Phi(N)$ and $AB^{-1}$ and
$B$ respectively.)  The associated modular curve $X(\Phi_0(N))$ is a
quotient of the Fermat curve, and is isomorphic to the curve
\[
v^N=\frac{u}{1-u},
\]
where $u=X^N$ and $v=\frac{X}{Y}$.  Denote by $\H$ the complex upper
half-plane. If $\tau \in \H$ and $q=e^{2\pi i \tau}$, then
\begin{align*}
  \tilde{\lambda}(\tau)&=-\frac{1}{16}q^{-1/2}\prod_{n=1}^\infty
  \left( \frac{1-q^{n-1/2}}{1+q^n} \right)^8,\\
  1-\tilde{\lambda}(\tau)&=\frac{1}{16}q^{-1/2}\prod_{n=1}^\infty
  \left( \frac{1+q^{n-1/2}}{1+q^n} \right)^8
\end{align*}
are modular functions for $\Gamma(2)$. Moreover, they are holomorphic
on $\H$, and $\tilde{\lambda}(\tau)\ne 0,1$ for all $\tau \in \H$. It
follows that there exist holomorphic functions $\tilde{x}(\tau)$ and
$\tilde{y}(\tau)$ on $\H$, such that
$\tilde{x}(\tau)^N=\tilde{\lambda}(\tau)$ and $\tilde{y}(\tau)^N=
1-\tilde{\lambda}(\tau)$, so we have
that
\[
\tilde{x}(\tau)^N+\tilde{y}(\tau)^N = 1
\]
and in fact both $\tilde{x}(\tau)$ and $\tilde{y}(\tau)$ are modular
functions for $\Phi(N).$ We normalize $\tilde{x}(\tau)$ and
$\tilde{y}(\tau)$ by
setting
\[
\x:=(-1)^\frac{1}{N}16^\frac{1}{N}\tilde{x}(\tau) \quad
\textrm{and} \quad \y:=16^\frac{1}{N}\tilde{y}(\tau).
\]
Now, $\x$ and $\y$ have rational Fourier coefficients, and we have
that
\begin{equation}\label{eqx}
\x^N-\y^N= -16.
\end{equation}

For $\gamma= {\sm a b c d} \in \SL_2(\Z)$ and a (weakly) modular form
$f(\tau)$ of weight $k$ define as usual the slash operator
$$(f|\gamma)(\tau):= (c\tau+d)^{-k}f(\gamma \tau).$$
A straightforward calculation \cite[\S2]{TY} shows
\begin{align*}
(x|A)(\tau) &= \zeta_N \x \qquad (x|B)(\tau) = \zeta_N \x,\\
(y|A)(\tau) &= \zeta_N \x \qquad (y|B)(\tau) = \y,
\end{align*}
where $\zeta_N$ is a primitive $N$th root of unity.  
Hence $$t(\tau):=\frac{\x}{\y}$$ is invariant under $\Phi_0(N)$.

The modular curve $X(2)$ has three cusps: $0$, $1$, and
$\infty$. There is one cusp of the curve $X(\Phi_0(N))$ lying above
each of the cusps $0$ and $1$, and $N$ cusps $\infty_1,\ldots ,
\infty_N$ lying above the cusp $\infty$. As functions on
$X(\Phi_0(N))$, $\tilde{\lambda}(\tau)$ and $1-\tilde{\lambda}(\tau)$
have simple poles at $\infty_i$, and they have zeros of order $N$ at
the cusps $0$ and $1$ respectively. The function $t(\tau)$ is
holomorphic on $\H$, nonzero at the cusps above infinity, has a pole
of order one at the cusp $1$, and a zero of order one at the cusp $0$
(so $t(\tau)$ is a Hauptmoduln for $X(\Phi_0(N))$).

Denote by $S_3(\Phi_0(N))$ the space of cusp forms of weight 3 for
$\Phi_0(N)$. It is well known that $\theta_1(\tau):=(\sum_{n \in \Z}
e^{\pi i n^2 \tau})^2$ is a modular form of weight 1 for $\Delta$. It
has a zero at the (irregular) cusp $1$ of order $1/2$.

Let $\Gamma$ be a finite index subgroup of $\SL_2(\Z)$ of genus $g$
such that $-I\notin \Gamma$. For $k$ odd, Shimura \cite[Theorem 2.25]{Shi}
gives the following formula for the dimension of $S_k(\Gamma)$
\[
\dim S_k(\Gamma) =
(k-1)(g-1)+\frac{1}{2}(k-2)r_1+\frac{1}{2}(k-1)r_2+\sum_{i=1}^{j}\frac{e_i-1}{2e_i},
\]
where $r_1$ is the number of regular cusp, $r_2$ is the number of
irregular cusps, and the $e_i$ are the orders of elliptic
points. Since $\Phi_0(N)$ has no elliptic points ($\Delta$ being
free), it follows that $\dim S_3(N)=\frac{N-1}{2}$.

Define 
\begin{equation}
  \label{eq:fi} 
  f_i(\tau):=\theta_1^3(\tau) t^i(\tau)
  \frac{1}{16(1-\tilde{\lambda}(\tau))} 
\end{equation}
for $i=1,2, \ldots N-1$.
The
divisor of $f_i(\tau)$ is
\[
\textrm{div}(f_i) = i(0)+(\frac{1}{2}N-i)(1)+\sum_{j=1}^N (\infty_j).
\]
Hence $\{f_i(\tau)\}$, for $i=1,\ldots, \frac{N-1}{2}$, form a basis
of $S_3(\Phi_0(N))$. If $i=\frac{N+1}{2}, \ldots, N-1$, then
$f_i(\tau)$ has a pole at the cusp $1$, and since the cusp $1$ is
irregular the constant Fourier coefficient is zero. It follows
$f_i(\tau)\in S_3^\wx(\Phi_0(N))$. Since $(t|B)(\tau)=\zeta_N
t(\tau)$, it follows that $(f_i|B)(\tau)=\zeta_N^i f_i(\tau)$.

\section{$\ell$-adic representations}

In this section we define two closely related compatible families of
$\ell$-adic Galois representations of $\GalQ$ attached to the space of
cusp forms $S_3(\Phi_0(N))$. The first family $\rho_{N,\ell}:\GalQ
\mapright{} \GL_{N-1}(\Ql)$ is a $\ell$-adic realisation of the motive
associated to the space of cusp forms $S_3(\Phi_0(N))$ (which we
recall has dimension $(N-1)/2$). It is a special case of second
author's construction from \cite[Section 5]{SASD}. For a more detailed
description see \cite[Section 5]{LLY}.

To describe the second family, consider the elliptic surface fibred
over the modular curve $X(\Phi_0(N))$ defined by the affine equation
\[
\mathcal{E}^N: Y^2= X(X+1)(X+t^N),
\]
together with the map 
\[
h:\mathcal{E}^N \mapright{} X(\Phi_0(N)),
\]
mapping $(X,Y,t) \longmapsto t$. It is obtained from the Legendre
elliptic surface fibred over $X(2)$
\[
\mathcal{E}: Y^2 = X(X-1)(X-\lambda),
\]
by substituting $\lambda = 1-t^N$.  Note that $\lambda$ corresponds to
$\lambda(\tau)=16q^{\frac{1}{2}}-128q+704q^{\frac{3}{2}}+\cdots$, the
usual lambda modular function on $\Gamma(2)$, and we can check
directly that
\[
\lambda(\tau)=1-t(\tau)^N.
\]
The map $h$ is tamely ramified along the cusps and elliptic points so
following \cite[Section 5]{LLY} we may define $\ell$-adic Galois
representation $\rho_{N,\ell}^*:\GalQ \mapright{} \GL_{N-1}(\Ql)$ as
follows: let $X(\Phi_0)^0$ be the complement in $X(\Phi_0)$ of the cusps and
elliptic points. Denote by $i$ the inclusion of $X(\Phi_0)^0$ into
$X(\Phi_0)$, and by $h': \mathcal{E}_N \mapright{} X(\Phi_0(N))^0$ the
restriction of $h$. For a prime $\ell$ we obtain a sheaf
\[
\mathcal{F}_\ell=R^1 h'_*\Ql
\]
on $X(\Phi_0)^0$, and also a sheaf $i_*\mathcal{F}_\ell$ on
$X(\Phi_0)$. The action of $\GalQ$ on the $\Ql$-space
\[
W_\ell = H^1_{et}(X(\Phi_0)\otimes \Qbar, i_*\mathcal{F}_\ell)
\]
defines an $\ell$-adic representation $\rho_{N,\ell}^*:\GalQ
\mapright{} \GL_{N-1}(\Ql)$.

Proposition 5.1 of \cite{LLY} implies that the two representations
$\rho_{N,\ell}^*$ and $\rho_{N,\ell}$ are isomorphic up to a twist by
a quadratic character of $\GalQ$.

\section{Jacobi sums and Gr\"ossencharacters of cyclotomic field}

We review some results of Weil \cite{Weil}.  Let $m>1$ be an integer,
$\zeta_m$ a primitive $m$-th root of unity, and $\mathfrak{p}$ a prime
ideal of $\Qm$ relatively prime to $m$. For any integer $t$ prime to
$m$, let $\sigma_t\in \mathrm{Gal}(\Qm/\Q)$ be the automorphism $\zeta_m
\rightarrow{} \zeta_m^t$. Denote by $q$ the norm of
$\mathfrak{p}$, so that $q\equiv 1 \pmod{m}$. Let $\chip$ be the
$m$-th power residuse symbol: for $x\in \Qm$ prime to
$\mathfrak{p}$, $\chip(x)$ is the unique $m$-th root of unity
such that 
\[
\chip(x) \equiv x^{\frac{q-1}{m}} \pmod{\mathfrak{p}}.
\]
It follows that $\chip:{\Z[\zeta_m]}/{\mathfrak{p}}\cong
\bbF_q \mapright{} \mu_m$ is a multiplicative character of order $m$.

\begin{defn}[Jacobi sums]
  For a positive integer $r$ and $a=(a_1,\ldots, a_r)\in \Z^r$ we
  define
  \[
  J_a(\mathfrak{p}):=(-1)^r \sum_{\substack{x_1+\ldots+x_r \equiv -1
      (\mathfrak{p}) \\ x_1,\ldots, x_r \bmod
      \mathfrak{p}}}\chi_\mathfrak{p}(x_1)^{a_1}\ldots
  \chi_p(x_r)^{a_r},
  \]
  where sum ranges over complete set of representatives of congruence
  classes modulo $\frak{p}$ in $\Qm$. We extend the definition of
  $J_a(\mathfrak{a})$ to all ideals $\mathfrak{a}$ of $\Qm$ prime to
  $m$ by multiplicativity.
\end{defn}

Let $K$ be a number field. $J_K=\prod_{\nu}^\prime K_\nu^*$ its idele
group of $K$. Recally that a \emph{Gro\"ssencharacter} of $K$ is any
continuous homomorphism $\psi\colon J_K \to \C^\times$, trivial on the
group of principal ideles $K^\times\subset J_K$, and that $\psi$ is
unramified at a prime $\p$ if $\psi(\frak{o}_\p^\times)=1$.

Recall also the standard way to view a Gr\"ossencharacter $\psi$ as a
function on the nonzero ideals of $K$, as
follows. Let $\mathfrak{p}$ be a prime of $K$, let $\pi$ be
a uniformizer of $K_\mathfrak{p}$, and let $\alpha_\mathfrak{p}\in J_K$
be the idele with component $\pi$ at the place $\p$ and $1$ at all
other places. One defines
\begin{equation*}
\psi(\p)= 
\begin{cases}
\psi(\alpha_\p) & \text{if } \psi \text{ is unramified at } \p, \\
0 & \text{otherwise}\\
\end{cases}
\end{equation*}
and extends the definition to all nonzero ideals by multiplicativity.
\begin{defn}
  The Hecke $L$-series attached to a Grossencharacter $\psi$ of $K$ is
  given by the Euler product over all primes of $K$
  \[
  L(\psi,s) =
  \prod_\p\left(1-\frac{\psi(\p)}{N(\p)^s}\right)^{-1}.
  \]
\end{defn}

\begin{thm}[Weil, \cite{Weil}]
\label{thm:Weil}
For each $a\ne (0)$ the function $J_a(\mathfrak{a})$ is a
Grossencharacter on $\Qm$ of conductor dividing $m^2$. Its ideal
factorisation is given by the formula
\[
(J_a(\mathfrak{a}))=\mathfrak{a}^{\omega_m(a)},
\]
where 
\[
\omega_m(a)=\sum_{\substack{(t,m)=1\\t \bmod{m}}}\left[
  \sum_{\rho=1}^r \left\langle \frac{t
      a_\rho}{m}\right\rangle\right]\sigma_t^{-1}
\]
and $\langle x \rangle$ denotes the fractional part of a rational number $x$.
\end{thm}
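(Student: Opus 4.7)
The plan is to follow Weil's original strategy \cite{Weil}: express the Jacobi sum as a ratio of Gauss sums, apply Stickelberger's theorem to each factor, and then promote the resulting ideal factorization to a Grossencharacter.

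First I would fix a nontrivial additive character $\psi$ of the residue field $\mathfrak{o}/\mathfrak{p}\cong\bbF_q$ and introduce the Gauss sums
$$
g(\chip^b) = \sum_{x\in\bbF_q^*} \chip(x)^b\,\psi(x).
$$
An elementary manipulation using orthogonality of additive characters yields the classical identity
$$
J_a(\mathfrak{p}) \;=\; \frac{g(\chip^{a_1})\cdots g(\chip^{a_r})}{g(\chip^{a_1+\cdots+a_r})}
$$
whenever $a_1+\cdots+a_r\not\equiv 0\pmod m$; the degenerate case is handled by a direct count using $g(\chi)\overline{g(\chi)}=q$. Stickelberger's theorem then gives the ideal factorization of each Gauss sum as $(g(\chip^b))=\mathfrak{p}^{\theta_m(b)}$, where $\theta_m(b)=\sum_{(t,m)=1}\langle tb/m\rangle\sigma_t^{-1}$. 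Combining with the additivity identity
$$
\theta_m(a_1)+\cdots+\theta_m(a_r)-\theta_m(a_1+\cdots+a_r)=\omega_m(a),
$$
which one verifies term-by-term from the definition of the fractional part, yields the claimed formula $(J_a(\mathfrak{p}))=\mathfrak{p}^{\omega_m(a)}$ at primes coprime to $m$. This extends multiplicatively to all ideals prime to $m$.

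Next, to establish the Grossencharacter property, I would observe that the factorization $(J_a(\mathfrak{a}))=\mathfrak{a}^{\omega_m(a)}$ implies that for any principal $\mathfrak{a}=(\alpha)$ coprime to $m$, the quantity $J_a((\alpha))/\alpha^{\omega_m(a)}$ is a unit of $\Qm$. The key step is the congruence (originally due to Stickelberger, alternatively derived from the Gross--Koblitz formula) that this unit depends on $\alpha$ only through its class in $(\mathfrak{o}/m^2)^\times$. With this congruence in hand, one assembles the idelic character by specifying it at each $\mathfrak{p}\nmid m$ via $J_a(\mathfrak{p})$, at the archimedean places via the algebraic $\infty$-type encoded by $\omega_m(a)$, and at the places dividing $m$ by the unique choice making the global character trivial on $K^\times$; this last step is consistent precisely because of the $m^2$-congruence.

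The main obstacle is the conductor bound: showing that $J_a((\alpha))/\alpha^{\omega_m(a)}$ factors through $(\mathfrak{o}/m^2)^\times$ requires $p$-adic control of Gauss sums to higher order at the primes dividing $m$, and this is the delicate part of Weil's argument. Everything else---the Gauss sum decomposition, the Stickelberger factorization, and the verification of multiplicativity together with triviality on principal ideles once the congruence is in place---is essentially formal.
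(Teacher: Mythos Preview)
The paper does not supply its own proof of this theorem: it is stated with attribution to Weil \cite{Weil} and used as a black box. Your sketch is a faithful outline of Weil's original argument---Gauss-sum decomposition of $J_a$, Stickelberger for the ideal factorisation, and the congruence modulo $m^2$ to obtain the Grossencharacter property and conductor bound---so there is nothing to compare against in the paper itself.
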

We will need the following technical lemma.

\begin{lem}
\label{lem:technical}
Let $N>1$ be an odd integer, $k$ and $d$ positive integers with $d|N$,
$p\equiv 1 \pmod{N}$ a rational prime, $\mathfrak{p}$ a prime of
$\Z[\zeta_{N/d}]$ above $p$, and $\tilde{\mathfrak{p}}$ a prime of
$\Z[\zeta_{\frac{p^k-1}{d}}]$ above $\mathfrak{p}$. Write
$(p^k-1)/d=2NN'/d.$ Let $J_{(2,N/d)}(\mathfrak{p})$ and
$J_{(2N',NN'/d)}(\tilde{\mathfrak{p}})$ be Jacobi sums associated to the
fields $\Q(\zeta_{N/d})$ and $\Q(\zeta_{\frac{p^k-1}{d}})$ with
defining ideals $2N/d$ and $(p^k-1)/d$ (i.e.~the characters $\chi_\p$ and
$\chi_{\tilde{\p}}$ are of order $2N/d$ and $(p^k-1)/d=2NN'/d$). Then
\[
\left(J_{(2,N/d)}(\p)\right)^{2k}=J_{(2N',NN'/d)}(\tilde{\mathfrak{p}})^2.
\]
\end{lem}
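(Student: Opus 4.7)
The plan is to apply the Hasse--Davenport lifting relation, after identifying the characters on the residue field of $\tilde{\mathfrak{p}}$ as norm-compositions of the characters on $\mathbb{F}_p$ appearing in $J_{(2,N/d)}(\mathfrak{p})$.

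First I would establish the setup. Since $p \equiv 1 \pmod{N}$ and $N$ is odd, $2N/d$ divides $p-1$, so $p$ splits completely in $\Q(\zeta_{N/d}) = \Q(\zeta_{2N/d})$; hence $N\mathfrak{p} = p$ and $\chi_{\mathfrak{p}}$ is a character of $\mathbb{F}_p^{\times}$ of order $2N/d$. I would next check that the residue degree of $\tilde{\mathfrak{p}}$ over $p$ is exactly $k$: if $f = \mathrm{ord}_m(p) < k$ with $m = (p^k-1)/d$, then $f \mid k$ and $m \mid p^f - 1$, so $(p^k-1)/(p^f-1) \mid d$; but $(p^k-1)/(p^f-1) \ge p^f + 1 > p > N \ge d$, a contradiction. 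Thus $\chi_{\tilde{\mathfrak{p}}}$ is a character of $\mathbb{F}_{p^k}^{\times}$ of order $2NN'/d$. Using the defining congruences $\chi_{\mathfrak{p}}(y) \equiv y^{d(p-1)/(2N)} \pmod{\mathfrak{p}}$ and $\chi_{\tilde{\mathfrak{p}}}(x) \equiv x^d \pmod{\tilde{\mathfrak{p}}}$, together with the identity $p^k-1 = 2NN'$, a direct check yields
\[
\chi_{\tilde{\mathfrak{p}}}^{2N'} = \chi_{\mathfrak{p}}^2 \circ N_{\mathbb{F}_{p^k}/\mathbb{F}_p}, \qquad \chi_{\tilde{\mathfrak{p}}}^{NN'/d} = \chi_{\mathfrak{p}}^{N/d} \circ N_{\mathbb{F}_{p^k}/\mathbb{F}_p},
\]
since both sides of the first equality act as $x \mapsto x^{2dN'}$ and both sides of the second as $x \mapsto x^{(p^k-1)/2}$ on $\mathbb{F}_{p^k}^{\times}$.

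The Hasse--Davenport relation $-g(\chi \circ N) = (-g(\chi))^k$ applied to each of the three Gauss sums appearing in $J(\chi_1,\chi_2) = g(\chi_1)g(\chi_2)/g(\chi_1\chi_2)$ (for $\chi_1\chi_2$ nontrivial) then gives
\[
J_{\mathbb{F}_{p^k}}(\chi_1 \circ N,\, \chi_2 \circ N) = (-1)^{k+1}\, J_{\mathbb{F}_p}(\chi_1,\chi_2)^k,
\]
where $J$ is the standard Jacobi sum $\sum_u \chi_1(u)\chi_2(1-u)$. The paper's convention (summing over $x+y \equiv -1$ rather than $x+y = 1$) differs from the standard one by a factor $\chi_1\chi_2(-1) \in \{\pm 1\}$. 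Squaring both sides simultaneously eliminates this sign and the $(-1)^{k+1}$ from Hasse--Davenport, producing the claimed identity.

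The main obstacle is keeping all the signs straight --- the shift from $+1$ to $-1$ in the paper's definition, the sign in Hasse--Davenport, and the possibility that $\chi(-1) = -1$. Squaring resolves all of these at once, which is presumably why the lemma is stated with squares. Apart from this sign bookkeeping, the only other subtle point is the residue-degree calculation above, which is where the hypothesis $p \equiv 1 \pmod{N}$ plays its essential role.
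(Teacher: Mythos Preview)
Your proposal is correct and follows essentially the same route as the paper: identify the characters $\chi_{\tilde{\mathfrak p}}^{2N'}$ and $\chi_{\tilde{\mathfrak p}}^{NN'/d}$ as norm-lifts of $\chi_{\mathfrak p}^2$ and $\chi_{\mathfrak p}^{N/d}$ (the paper states this more compactly as $\chi_{\tilde{\mathfrak p}}^{N'}=\chi_{\mathfrak p}\circ\mathrm{Norm}$), factor the Jacobi sums through Gauss sums, and apply the Hasse--Davenport lifting relation. Your treatment is more explicit on the residue-degree check and the sign bookkeeping, but the argument is the same.
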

\begin{proof}
  Straightforward calculation shows that the character
  $\chi_{\tilde{\p}}^{N'}$ is the lift of $\chi_\p$,
  i.e.~ $\chi_\p(\textrm{Norm}(x))= \chi_{\tilde{\p}}^{N'}(x)$, for all
  $x \in \Z[\zeta_\frac{p^k-1}{d}]/ \tilde{\p}$, where
  $\textrm{Norm}(x)$ is the norm from $\Z[\zeta_\frac{p^k-1}{d}]/
  \tilde{\p}$ to $\Z[\zeta_\frac{N}{d}]/ \p$.

  Using the factorization of Jacobi sums by Gauss sums (see
  \cite[2.1.3]{BEW}), the lemma then follows directly from the
  Davenport-Hasse theorem on lifted Gauss sums (see
  \cite[11.5.2]{BEW}).
\end{proof}

\section{Traces of Frobenius}
To simplify notation, denote $\mathcal{F}= i_*\mathcal{F}_\ell$. The
Lefschetz fixed point formula and standard facts about elliptic curves
over finite fields gives the following theorem.
\begin{thm}
\label{thm:trace}
$Tr(Frob_q|W_\ell)$ may be computed as follows:
\begin{itemize}
\item[(1)] 
  \[
  Tr(Frob_q|W_\ell)=-\sum_{t\in X(\Phi_0(N))(\bbF_q)} Tr(Frob_q|\mathcal{F}_t).
  \]
\item[(2)] If the fiber $\mathcal{E}^N_t$ is smooth, then
  \[
  Tr(Frob_q|\mathcal{F}_t)=Tr(Frob_q|H^1(\mathcal{E}^N_t,
  \Ql))=q+1-\#\mathcal{E}^N_t(\bbF_q).
  \]
\item[(3)] If the fiber $\mathcal{E}^N_t$ is singular, then 
  \begin{equation*}
    Tr(Frob_q|\mathcal{F}_t)= 
    \begin{cases}
      1 & \text{if the fiber is split multiplicative}, \\
      -1 & \text{if the fiber is nonsplit multiplicative},\\
      0 & \text{if the fiber is additive}.
    \end{cases}
  \end{equation*}
\end{itemize}
\end{thm}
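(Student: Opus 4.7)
The plan is to apply the Grothendieck--Lefschetz trace formula to the constructible $\Ql$-sheaf $\mathcal{F}=i_*\mathcal{F}_\ell$ on the smooth projective curve $X(\Phi_0(N))/\bbF_q$, which yields
\[
\sum_{i=0}^{2}(-1)^{i}\, Tr(Frob_q\mid H^{i}_{et}(X(\Phi_0(N))\otimes\overline{\bbF_q},\mathcal{F}))
=\sum_{t\in X(\Phi_0(N))(\bbF_q)} Tr(Frob_q\mid\mathcal{F}_t).
\]
To extract clause (1) it suffices to annihilate the $H^0$ and $H^2$ contributions. Since $i$ is an open immersion with dense image, $H^0(X(\Phi_0(N)),i_*\mathcal{F}_\ell)=H^0(X(\Phi_0(N))^0,\mathcal{F}_\ell)$, and the right-hand side is the space of monodromy invariants of the lisse sheaf $\mathcal{F}_\ell$. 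This vanishes because $\mathcal{E}^N/X(\Phi_0(N))^0$ is not isotrivial (its $j$-invariant is a non-constant function of $t$), so the monodromy representation has no fixed line. The vanishing of $H^2$ then follows by Poincar\'e duality on the curve $X(\Phi_0(N))$, using the self-duality of $\mathcal{F}_\ell$ up to a Tate twist furnished by the Weil pairing on the elliptic fibres.

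For clause (2), at $t\in X(\Phi_0(N))^{0}(\bbF_q)$ the stalk is $\mathcal{F}_t=(R^1h'_*\Ql)_t=H^{1}_{et}(\mathcal{E}^N_t,\Ql)$. Applying Grothendieck--Lefschetz to the smooth elliptic curve $\mathcal{E}^N_t/\bbF_q$, and using $Tr(Frob_q\mid H^0)=1$ and $Tr(Frob_q\mid H^2)=q$, gives
\[
\#\mathcal{E}^N_t(\bbF_q)=q+1- Tr(Frob_q\mid H^{1}_{et}(\mathcal{E}^N_t,\Ql)),
\]
which rearranges to the stated identity.

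For clause (3), I analyse the stalk of $i_*\mathcal{F}_\ell$ at a bad point $t$. Because $h$ is tamely ramified there, the stalk equals the space $(\mathcal{F}_\ell)^{I_t}$ of inertia invariants on a nearby generic fibre. If $\mathcal{E}^N_t$ has multiplicative reduction, the local monodromy is unipotent with a single Jordan block, so the invariants form a line; the identity component of the N\'eron model of $\mathcal{E}^N_t$ is $\mathbb{G}_m$ in the split case and its non-split quadratic twist in the non-split case, and Frobenius acts on this invariant line by $+1$ or $-1$ accordingly. If $\mathcal{E}^N_t$ has additive reduction, tame inertia acts through a finite cyclic group of order in $\{2,3,4,6\}$ with no non-zero fixed vector, so $(\mathcal{F}_\ell)^{I_t}=0$ and the contribution is zero.

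The main technical hurdle is clause (3): rigorously computing the inertia-invariants and the Frobenius eigenvalue on them in each Kodaira type, and in particular identifying the sign $-1$ in the non-split multiplicative case with the quadratic twist of $\mathbb{G}_m$. Clauses (1) and (2), by contrast, are straightforward applications of the Lefschetz trace formula once one knows the vanishing of the extremal cohomology groups.
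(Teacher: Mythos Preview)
Your argument is correct and is exactly the approach the paper has in mind: the paper gives no proof beyond the remark preceding the theorem that it ``follows from the Lefschetz fixed point formula and standard facts about elliptic curves over finite fields,'' and you have simply spelled this out. (Two minor points that do not affect correctness: the identification of the stalk of $i_*\mathcal{F}_\ell$ at a bad point with the inertia invariants holds for any open immersion and does not actually need tameness, and your treatment of the additive case tacitly assumes potentially good reduction---but every singular fibre of $\mathcal{E}^N$ is in fact multiplicative, so the additive clause is vacuous in this paper.)
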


\begin{thm}
\label{thm:gross}
Let $N>1$ be an odd integer, and $\ell$ a prime. The Galois
representations $\rho^*_{N,\ell}$ and $\oplus_{d|N} J_{(2,N/d)}^2$
have the same local factors at every prime $p\nmid 2N\ell$.
\end{thm}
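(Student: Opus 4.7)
The plan is to compute $\mathrm{Tr}(\mathrm{Frob}_{p^k} \mid W_\ell)$ directly for each $k \ge 1$ via Theorem~\ref{thm:trace}, express the result as a sum of Jacobi sums over $\bbF_{p^k}$, and match this with the trace of $\mathrm{Frob}_p^k$ on $\bigoplus_{d\mid N} J_{(2,N/d)}^2$. Since local L-factors at $p$ are determined by the traces of all powers of Frobenius, it suffices to verify this matching for every $k$.

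Fix $q = p^k$ and split $X(\Phi_0(N))(\bbF_q)$ into smooth fibers ($t \in \bbF_q$ with $t(t^N-1) \ne 0$) and singular fibers ($t \in \{0,\infty\}$, and the $\gcd(N,q-1)$ roots of $t^N = 1$). On smooth fibers, the standard point count for $Y^2 = X(X+1)(X+t^N)$ gives $\mathrm{Tr}(\mathrm{Frob}_q \mid \mathcal{F}_t) = -\sum_{x \in \bbF_q} \chi_2(x(x+1)(x+t^N))$, where $\chi_2$ is the quadratic character (with $\chi_2(0) = 0$). Tangent-cone inspection at each node, combined with Theorem~\ref{thm:trace}(3), yields the singular-fiber contributions: trace $+1$ at $t \in \{0,\infty\}$ (both split multiplicative, as the tangent cone is $Y^2 = X^2$) and trace $\chi_2(-1)$ at each $t$ with $t^N = 1$ (where the tangent cone is $Y^2 = -(X+1)^2$, hence split iff $-1$ is a square in $\bbF_q$).

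The key character-sum computation uses the identity $\#\{t : t^N = u\} = \sum_{\eta^N = 1}\eta(u)$ (with $\eta(0) = 0$ for $\eta \ne 1$). Exchanging sums, the trivial character contributes zero, and each nontrivial $\eta$ with $\eta^N = 1$ contributes, after the substitutions $u = -xv$ and $x \mapsto -x$ and using $\eta(-1)^2 = \eta(1) = 1$ (since $\eta$ has odd order), a copy of $J(\eta, \chi_2)^2$, where $J(\eta,\chi_2) = \sum_w \eta(w)\chi_2(1-w)$ is the classical Jacobi sum. Direct evaluation of the singular-fiber character-sum contributions (namely $-1$ at $t = 0$ and $-\chi_2(-1)$ at each $t$ with $t^N = 1$) shows that the terms involving $\gcd(N, q-1)\chi_2(-1)$ cancel between smooth and singular contributions, leaving
\[
\mathrm{Tr}(\mathrm{Frob}_q \mid W_\ell) \;=\; \sum_{\eta \neq 1,\ \eta^N = 1} J(\eta,\chi_2)^2 \;-\; 1.
\]
The trailing $-1$ absorbs the trivial Gr\"ossencharacter from the $d = N$ summand (where $J_{(2,1)}^2 = 1$), reconciling the dimension count $\sum_{d\mid N}\phi(N/d) = N$ with $\dim W_\ell = N-1$; equivalently, the comparison is with $\bigoplus_{d\mid N,\,d\neq N}\mathrm{Ind}_{\Q(\zeta_{N/d})}^{\Q} J_{(2,N/d)}^2$.

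Finally, identify the Jacobi sums. Each nontrivial $\eta$ of order $m = N/d$ can be written via Teichm\"uller as a power of $\chi_{\tilde{\mathfrak{p}}}$ for a prime $\tilde{\mathfrak{p}}$ of $\Q(\zeta_{q-1})$ above $p$, and a direct comparison matches $J(\eta,\chi_2)^2$ with $J_{(2N', NN'/d)}(\tilde{\mathfrak{p}})^2$ in the notation of Lemma~\ref{lem:technical} (with $N' = (q-1)/(2N)$). Lemma~\ref{lem:technical} equates this with $J_{(2,N/d)}(\mathfrak{p})^{2k}$ for the prime $\mathfrak{p} \subset \Q(\zeta_{N/d})$ below $\tilde{\mathfrak{p}}$. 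Summing over characters of each order $m = N/d$ reorganizes into a sum over primes $\mathfrak{p} \mid p$ in $\Q(\zeta_{N/d})$, which by the standard trace formula for induced representations yields $\mathrm{Tr}(\mathrm{Frob}_p^k \mid \mathrm{Ind}_{\Q(\zeta_{N/d})}^{\Q} J_{(2,N/d)}^2)$. The main technical obstacle is the application of Lemma~\ref{lem:technical} uniformly for all $p \nmid 2N\ell$, including primes where $\Q(\zeta_{N/d})$ does not split completely at $p$, together with the careful bookkeeping for the cancellation of boundary and singular-fiber contributions described above.
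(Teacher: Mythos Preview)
Your overall strategy---point-count via Theorem~\ref{thm:trace}, reorganise into Jacobi sums, then invoke Lemma~\ref{lem:technical}---is exactly the paper's approach. The character-sum manipulation you describe and the use of Davenport--Hasse are the same as in the paper, and you correctly flag the gap at primes $p\not\equiv 1\pmod N$ (the paper likewise treats only the split case in detail and waves at the rest).

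However, there is a genuine error in your singular-fibre analysis at $t=\infty$. The Legendre surface $Y^2=X(X-1)(X-\lambda)$ has Kodaira type $I_2^*$ (not $I_2$) at $\lambda=\infty$: the minimal Weierstrass model there is obtained only after a twist by $\mu^{1/2}$ (with $\mu=1/\lambda$), so the reduction is additive. Pulling back along $\lambda=1-t^N$ with odd ramification index $N$ gives type $I_{2N}^*$ at $t=\infty$; the local monodromy is $-\bigl(\begin{smallmatrix}1&2N\\0&1\end{smallmatrix}\bigr)$, which has no invariants. Hence $(i_*\mathcal{F}_\ell)_{t=\infty}=0$ and the contribution to the trace is $0$, not $+1$. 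Your claim that ``the tangent cone is $Y^2=X^2$'' at $t=\infty$ cannot be obtained by plugging into the affine equation; one must first change model, and the required substitution $Y\mapsto Y\mu^{3N/2}$ is not rational since $N$ is odd.

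With this correction the spurious $-1$ disappears and you recover the paper's formula $\mathrm{Tr}(\mathrm{Frob}_q\mid W_\ell)=\sum_{\eta\ne 1,\ \eta^N=1}J(\eta,\chi_2)^2$, i.e.\ the $N-1$ terms $J_i(1)^2$ for $i=1,\dots,N-1$. Your attempt to reconcile the $-1$ with the $d=N$ summand does not work anyway: the trivial Gr\"ossencharacter contributes $+1$ to the trace, so subtracting $1$ would move you further from, not closer to, a match. (You are right that the theorem as stated has a dimension mismatch---$N$ versus $N-1$---and the paper's own proof in fact only sums over proper divisors $d\mid N$, $d\ne N$; but this imprecision in the statement is separate from your computational error.)
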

\begin{proof}
  Let $k$ be a positive integer, $p$ be an odd prime, and $q=p^k$ such
  that $q\equiv 1 \pmod{N}$. Let $\chi$ be any character of
  $\Fq^\times$ of order $2N$ (which exists since $q \equiv 1
  \pmod{2N}$). We count the points on the elliptic surface $\mathcal{E}^N$
  (excluding all points at infinity):
\begin{align*}
\#\mathcal{E}^N(\Fq) &= \sum_{t \in \bbF_q} \sum_{x \in \bbF_q} \left( \chi^N(x(x+1)(x+t^N))+1\right) \\
 &= q^2 + \sum_{x\in \Fq} \chi^N(x(x+1))\sum_{t \in \Fq}\chi^N(x+t^N).
\end{align*}
Now
\begin{align*}
	\sum_{t \in \Fq} \chi^N(x+t^N) &=\left[\substack{x_1=t^N\\ x_2=-x-t^N}\right]= \chi^N(x)+\sum_{x_1+x_2 = -x} \sum_{\chi_N \text{ of order $|N$}} \chi_N(x_1)\chi^N(-x_2)\\
	&= \chi^N(x)+\chi^N(-1) \sum_{i=1}^{N-1} \sum_{x_1+x_2 = -x} \chi^{2i}(x_1)\chi^N(x_2).
\end{align*}
Define 
\begin{align*}
J_i(x)&:=\chi^N(-1)\sum_{x_1+x_2=-x}\chi^{2i}(x_1)\chi^N(x_2)\\
	&= \left[\substack{x_1=x_1'\cdot x\\ x_2=x_2'\cdot x}\right]=\chi^N(-1)\sum_{x_1'+x_2'=-1}\chi^{2i}(x)\chi^{2i}(x_1')\chi^N(x)\chi^N(x_2')\\
	&= \chi^N(-1)\chi^N(x)\chi^{2i}(x)\sum_{x_1'+x_2'=-1}\chi^{2i}(x_1'))\chi^N(x_2')\\
  &= \chi^{2i}(x)\chi^N(x)J_i(1).
\end{align*}
Then
\begin{align*}
\#\mathcal{E}^N(\Fq) &=q^2+\sum_{x\in \Fq}\chi^N(x(x+1))\left( \sum_{i=1}^{N-1}J_i(1)\chi^{2i}(x)\chi^N(x)+\chi^N(x) \right) \\
  &= q^2+\sum_{i=1}^{N-1}J_i(1)\left(\sum_{x\in \Fq}\chi^N(-x-1)\chi^N(-1)\chi^{2i}(x)+\sum_{x \in \Fq}\chi^N(x+1)\right)\\
  &= q^2+\sum_{i=1}^{N-1} J_i(1)^2.
\end{align*}
The fibre $\mathcal{E}^N_t$ of the elliptic surface $\mathcal{E}^N$ is
singular if and only if $t=0$ or $t^N=1$. (In the calculation below,
we refer to these $t$ as bad, and the others as good). In the first
case, $\mathcal{E}^N_0:y^2=(x+1)x^2$ has split multiplicative type. In
the second case, $\E:y^2=x(x+1)^2$ is split multiplicative if
$\chi^N(-1)=1$ (or equivalently if $p \equiv 1 \pmod{4}$), and
nonsplit multiplicative if $\chi^N(-1)=-1$. Denote by $M$ the number
of $N$th roots of unity in $\Fq$.  Theorem \ref{thm:trace} implies
\begin{align*}
Tr(Frob_q|W_\ell)&=-\sum_{t\in X(\Phi_0(N))(\bbF_q)} Tr(Frob_q|\mathcal{F}_t)\\
	&= \sum_{t \text{ good}}\#\E(\Fq)-(q+1)\cdot \#\{t\text{ good}\}-\sum_{t\text{ bad}}Tr(Frob_q|\mathcal{F}_t)\\
	&= \#\mathcal{E}^N(\Fq)+\#\{t \text{ good}\}-\sum_{t \text{ bad}}(\#\E(\Fq)-1)\\
	&\qquad\qquad\qquad-(q+1)\#\{t \text{ good}\}-(\chi^N(-1)M+1)\\
	&= q^2 + \sum_{i=1}^{N-1}J_i(1)^2-(q-1)-M(q-\chi^N(-1))\\
	&\qquad\qquad\qquad-q(q-1-M)-(\chi^N(-1)M+1)\\
	&= \sum_{i=1}^{N-1}J_i(1)^2
\end{align*}
Suppose that $p\equiv 1 \pmod{2N}$ (so that $p$ splits completely in
$\Q(\zeta_{2N})$). It is enough to show that
$Tr(Frob_{q}|W_\ell)=\sum_{d|N}\sum_{\p}J_{(2,N/d)}(\p)^{2k}$, where
the second sum is over the primes of $\Q(\zeta_{2N/d})$ lying above
$p$.  Fix $d|N$. For any $\tilde{\p}$ a prime of
$\Q(\zeta_{\frac{p^k-1}{d}})$ above $p$ the residual degree of
$\tilde{\p}$ in $\Q(\zeta_{\frac{p^k-1}{d}})$ is $k$ (since the order
of $p$ in $\left(\Z/d(p^k-1)\Z\right)^\times$ is $k$), hence
$\chi_{\tilde{\p}}$ is a character of $\Fq^\times$ of order
$\frac{p^k-1}{d}$. We can choose $\tilde{\p}$ such
that
\[
J_{(\frac{p^k-1}{N},\frac{p^k-1}{2d})}(\tilde{\p})^2=J_{d}(1)^2.
\]
By Lemma \ref{lem:technical} it follows
\[
J_{d}(1)^2 =J_{(2,N/d)}(\p)^{2k},
\]
where $\p$ is the prime of $\Q(\zeta_{N/d})$ below $\tilde{\p}$.
Since $J_{dj}(1)^2$'s are conjugate to each other for $j=1,\ldots,N/d$
with $(j,N/d)=1$, it follows
that
\[
\sum_{(j,N/d)=1}J_{dj}(1)^2=\sum_{\p\text{ above } p}
J_{(2,N/d)}(\p)^{2k}.
\]
The claim follows after summing over $d|N$. The case $p \not\equiv 1
\pmod{N}$ is proved in a similar way.
\end{proof}

\section{Atkin and Swinnerton-Dyer congruences}\label{sec:cong}

We now apply results of \S\ref{sec:2nd} to obtain congruences of Atkin
and Swinnerton-Dyer type between the Fourier coefficients of the
(weakly) modular forms $f_i(\tau)$. Let $p>3$ be a prime such that $p
\nmid N$. Set $R=\Z_p$, write $X=X(\Phi_0(N))$ and $X'=X(2)$ for the
extensions of the curves considered above to smooth proper curves
over. Let $g:X\mapright{}X'$ be the finite morphism that extends the
quotient map $\Phi_0(N)\backslash \H \mapright{} \Gamma(2)\backslash
\H$ (see proof of \cite[Proposition 5.2 a)]{SASD}). Denote by
$W:=DR(X,3)\otimes \Qpbar$ de Rham space corresponding to this
data. The action of $B={\sm 1 0 2 1}$ on the space of cusp forms
$S_3(\Phi_0(N))$ extends to $W$: for $h^\vee\in S_3(\Phi_0(N))^\vee$
and $f\in S_3(\Phi_0(N))$ we have $(h^\vee|B)(f)=h^\vee(f|B^{-1})$. We
write $W=\oplus_{i=1}^{N-1} W_i$, where $W_i$ is the eigenspace of $B$
corresponding to the eigenvalue $\zeta_N^i$. Since
$(f_i|B)(\tau)=\zeta_N^i f_i(\tau)$ for $i=1, \ldots, N-1$ and
$f_i(\tau)\in S_3^\wx(\Phi_0(N))$, Theorem \ref{thm:2ndkindZ} implies
that $f_i(\tau)\in W_i$. Let $\phi$ be the linear Frobenius
endomorphism of $W$ defined in \S\ref{sec:qexp}.

\begin{prop} 
For $i=1,\ldots, N-1$,
\[
\phi(W_i)\subset W_{i\cdot p \bmod{N}}.
\]
\end{prop}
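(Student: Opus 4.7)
The plan is to exploit the functoriality of the crystalline Frobenius with respect to the automorphism $B$. The subtlety is that $B$ acts on the Hauptmodul by $B^{*}t=\zeta_N t$, and is thus an automorphism of $X\otimes_{\Z_p}\Z_p[\zeta_N]$ rather than of $X$ itself (unless $N\mid p-1$); this is precisely why the proposition is formulated on $W=DR(X,3)\otimes_{\Z_p}\Qpbar$. First I would work at the intermediate level $V':=DR(X,\Z_p,3)\otimes_{\Z_p}\Z_p[\zeta_N]=DR(X\otimes\Z_p[\zeta_N],3)$, on which $B^{*}$ is a well-defined $\Z_p[\zeta_N]$-linear automorphism of order $N$ with the $f_j$ as eigenvectors of eigenvalues $\zeta_N^{j}$.

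On $V'$ I would distinguish two Frobenius-type endomorphisms. First, the $\Z_p[\zeta_N]$-linear extension $\phi\otimes 1$ of the paper's $\phi$. Second, the intrinsic crystalline Frobenius $\Phi$ of $X\otimes\Z_p[\zeta_N]$, which is $\sigma$-semi-linear where $\sigma\in\operatorname{Gal}(\Z_p[\zeta_N]/\Z_p)$ is the Frobenius $\zeta_N\mapsto\zeta_N^{p}$. These are related by $\Phi=(\phi\otimes 1)\circ\tilde\sigma$, where $\tilde\sigma:=\operatorname{id}\otimes\sigma$ denotes the scalar Galois action on $V'$. The essential geometric input will be the commutation relation $\Phi\circ B^{*}=B^{*}\circ\Phi$ on $V'$, which comes from the functoriality of the crystalline Frobenius applied to the $\Z_p[\zeta_N]$-morphism $B$ (ultimately from the naturality of absolute Frobenius on the special fibre).

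Since the $f_j$ have rational Fourier expansions, they lie in $V\otimes 1\subset V'$ and are fixed by $\tilde\sigma$. In the basis $(f_j)$ the matrix of $B^{*}$ is $\operatorname{diag}(\zeta_N^{j})$, and therefore $\tilde\sigma B^{*}\tilde\sigma^{-1}=\operatorname{diag}(\zeta_N^{jp})=B^{*p}$. Substituting $\Phi=(\phi\otimes 1)\tilde\sigma$ into $\Phi B^{*}=B^{*}\Phi$ and cancelling the invertible $\tilde\sigma$ yields $(\phi\otimes 1)\circ B^{*p}=B^{*}\circ(\phi\otimes 1)$ on $V'$, which extends $\Qpbar$-linearly to $W$. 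A one-line eigenvalue computation using the $\Qpbar$-linearity of $\phi$ -- for $v\in W_i$, $B^{*}\phi(v)=\phi(B^{*p}v)=\phi(\zeta_N^{ip}v)=\zeta_N^{ip}\phi(v)$ -- then delivers the desired inclusion $\phi(W_i)\subset W_{ip\bmod N}$.

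The main obstacle will be the careful book-keeping between the linear $\phi\otimes 1$ and the semi-linear $\Phi$ on the base-change, and a clean justification of the functoriality $\Phi B^{*}=B^{*}\Phi$ at the level of the modified de Rham complex underlying $DR(X,3)$ (which involves the symmetric power $\Sym^{k-2}\cE_X$ rather than plain de Rham cohomology); once these are secured, the conclusion is purely formal algebra.
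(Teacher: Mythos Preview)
Your approach is correct and essentially the same as the paper's: both rest on the commutation relation $B^{*}\circ\phi=\phi\circ B^{*p}$ (the paper records it as $B\phi=\phi B^{p}$, citing \cite{Long}) followed by the one-line eigenvalue computation you give. The only difference is that the paper quotes this relation from the literature, whereas you derive it by passing through the $\sigma$-semilinear crystalline Frobenius $\Phi$ on the base-change to $\Z_p[\zeta_N]$ and using $\tilde\sigma B^{*}\tilde\sigma^{-1}=B^{*p}$.
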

\begin{proof}
Since $B \phi = \phi B^p$ (see \cite[Section 4.4]{Long}), for $f\in W_i$ we have
\[
\phi(f)|B=\phi((f|B)^p)=\zeta_N^{ip}\phi(f),
\]
and the claim follows.
\end{proof}
Define $\alpha_{i}\in \Z_p$ by $\phi(f_i)=\alpha_i f_{i\cdot p \bmod{N}}$.
\begin{prop}
\label{prop:ord}
\begin{equation*}
\ord_p(\alpha_i) = 
\begin{cases}
2& \text{if } i=1,\ldots \frac{N-1}{2},\\
0 & \text{if } i=\frac{N+1}{2}, \ldots, N-1.
\end{cases}
\end{equation*}
\end{prop}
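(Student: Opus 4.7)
The plan is to combine two inputs: the strong divisibility (Mazur-type) of the crystalline Frobenius on the Hodge filtration, which gives the lower bound $\ord_p(\alpha_i) \ge 2$ for the cusp-form indices $i \le (N-1)/2$; and the Weil-number symmetry of the characteristic polynomial of $\phi$ stated in \S\ref{sec:summary}, which gives $\sum_{i=1}^{N-1}\ord_p(\alpha_i) = N-1$. Together these force termwise equality.

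More precisely, for $i \in \{1,\dots,(N-1)/2\}$ the form $f_i$ actually lies in $S_3(\Phi_0(N))$: by the divisor calculation of \S\ref{sec:Fermat} its order at the cusp $1$ is $\tfrac{N}{2}-i \ge 0$ and it vanishes at every other cusp, so via Theorem~\ref{thm:2ndkindZ} its class in $W$ lies in the Hodge subspace $F^{k-1}W = S_3(\Phi_0(N))\otimes\Qpbar$, coming from an element of $F^{k-1}L$ where $L := DR(X,\Z_p,3)$. Strong divisibility for the weight-$(k-1)=2$ filtered $F$-isocrystal $L$ gives $\phi(F^{k-1}L) \subseteq p^{k-1}L = p^2 L$, hence $\phi(f_i) = \alpha_i f_{ip\bmod N} \in p^2 L$. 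Taking each $f_j$ to be a generator of $(L\otimes\Z_p[\zeta_N])\cap W_j$ up to a $p$-adic unit (after enlarging scalars to diagonalize $B$), this forces $\ord_p(\alpha_i) \ge 2$ for $i \le (N-1)/2$. Meanwhile, the eigenvalues of $\phi$ on $W$ are $p^{k-1}$-Weil numbers by \S\ref{sec:summary}, pairing into complex-conjugate pairs with product $p^{k-1} = p^2$, so
\[
\det(\phi \mid W) \;=\; \pm p^{(k-1)d_k} \;=\; \pm p^{N-1},\qquad d_k = \dim S_3(\Phi_0(N)) = \tfrac{N-1}{2}.
\]
On the other hand, in the basis $\{f_1,\dots,f_{N-1}\}$ of $W$ the matrix of $\phi$ is a monomial matrix with nonzero entries $\alpha_1,\dots,\alpha_{N-1}$ (one per row and column, by the preceding proposition), so $\det(\phi \mid W) = \pm \prod_{i=1}^{N-1}\alpha_i$, giving $\sum_{i=1}^{N-1}\ord_p(\alpha_i) = N-1$.

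Combining,
\[
N-1 \;=\; \sum_{i=1}^{N-1}\ord_p(\alpha_i) \;\ge\; \sum_{i=1}^{(N-1)/2} 2 \;+\; \sum_{i=(N+1)/2}^{N-1} 0 \;=\; N-1,
\]
so equality must hold termwise, yielding $\ord_p(\alpha_i) = 2$ for $i \le (N-1)/2$ and $\ord_p(\alpha_i) = 0$ for $i \ge (N+1)/2$. The main technical point requiring care is the primitivity of each $f_j$ in the relevant integral lattice, so that strong divisibility translates cleanly into a valuation statement about $\alpha_i$; this reduces via Theorem~\ref{thm:2ndkindZ} to $p$-integrality and leading-unit behaviour of the Fourier expansions of the $f_j$ at every cusp, which is routine from the explicit formula \eqref{eq:fi} given $p > 3$ and $p \nmid N$.
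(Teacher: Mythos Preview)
Your argument is correct and is essentially the same as the paper's: the lower bound $\ord_p(\alpha_i)\ge 2$ for $i\le (N-1)/2$ comes from $\phi(S_3(\Phi_0(N)))\subset p^2\,DR(X,3)$ (which the paper cites as \cite[Prop.~3.4]{SASD}, your ``strong divisibility''), and the determinant $\det(\phi\mid W)=\pm p^{N-1}$ forces $\sum_i\ord_p(\alpha_i)=N-1$, whence equality. The paper dispatches your primitivity concern with the single remark ``since the $f_i$ are normalized''.
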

\begin{proof}
Proposition 3.4 of \cite{SASD} implies
\[
\phi(S_3(\Phi_0(N))) \subset p^2 DR(X,3).
\]
Since the $f_i$ are normalized, it follows that $\ord_p(\alpha_i) \geq
2$, for $i=1,\ldots, \frac{N-1}{2}$. On the other hand, the determinant of
$\phi$ is $\pm p^{2\dim S_3(\Phi_0(N))}=\pm p^{N-1}$, hence
$\ord_p(\alpha_1\cdot \alpha_2\cdot \ldots \cdot \alpha_{N-1})=N-1$
and the claim follows.
\end{proof}

Write $f_i(\tau)=\sum_{j=1}^\infty a_i(j)q^{\frac{j}{2}}$, for
$i=1, \ldots, N-1$. (Note that \eqref{eq:fi} implies $a_i(1)=1$.) From the description of the action of $\phi$
\eqref{eq:frob} on the de Rham space $DR(X,3)$ (and $DR(X,3)^{(p)}$
when $f_i$ is a cusp form, see \S\ref{sec:summary}), we thus obtain:
\begin{cor}
\label{cor:two}
For $i=1,\ldots, N-1$ and any positive integer $j$,
\[
\frac{p^2}{\alpha_i}a_{i}(j)\equiv a_{i\cdot p \bmod{N}}(pj) \pmod
{p^{2(\ord_p(j)+1)}}.
\]
\end{cor}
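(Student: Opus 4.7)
The plan is to translate the defining identity $\phi(f_i)=\alpha_i f_{i\cdot p\bmod N}$ (which holds in $DR^*(\Phi_0(N),\Z_p,3)$ by construction, since $\dim W_{ip\bmod N}=1$) into a congruence on Fourier coefficients via the commutative diagrams in the fourth and fifth bullets of \S\ref{sec:summary}. First I would pin down the local data at the cusp $\infty$: the stabilizer of $\infty$ in $\Phi_0(N)$ is generated by $A=\sm 1 2 0 1$, so the width is $m=2$, we may take $t=q^{1/2}$ (so $\delta=1$), and consequently $\gamma_p=1$. Thus $\tilde\phi$ acts on $\Z_p((t))$ as $\sum b_n t^n\mapsto p^2\sum b_n t^{np}$, so with $\tilde f_i=\sum_{j\ge 1}a_i(j)t^j$ one has $\tilde\phi(\tilde f_i)=p^2\sum_j a_i(j)t^{pj}$; and since $p>2$, the image $\partial^2(\Z_p((t)))$ is exactly $\{\sum c_n t^n:c_n\in n^2\Z_p\}$.

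Next I would split according to Proposition \ref{prop:ord}. In the weakly-modular regime $i\ge(N+1)/2$, we have $\ord_p(\alpha_i)=0$, so $\alpha_i\in\Z_p^\times$. Applying the fourth-bullet diagram to the identity $\phi(f_i)-\alpha_i f_{ip\bmod N}=0$ in $DR^*$ gives
\[
\tilde\phi(\tilde f_i)-\alpha_i\tilde f_{ip\bmod N}\in\partial^2(\Z_p((t))).
\]
Extracting the coefficient of $t^{pj}$ yields $p^2 a_i(j)-\alpha_i a_{ip\bmod N}(pj)\in(pj)^2\Z_p=p^{2(s+1)}\Z_p$ with $s=\ord_p(j)$; dividing by the unit $\alpha_i$ is precisely the claimed congruence.

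In the cusp-form regime $i\le(N-1)/2$, the form $f_i$ lies in $M_3(\Phi_0(N),\Z_p)$ and hence in $DR^*(\Phi_0(N),\Z_p,3)^{(p)}$, and a short Legendre-formula calculation shows $\langle k-1\rangle=\langle 2\rangle=2$ for every prime $p>3$. Applying the fifth-bullet diagram (exactly as in the proof of Theorem \ref{thm:long}(ii)) upgrades the containment to
\[
\tilde\phi(\tilde f_i)-\alpha_i\tilde f_{ip\bmod N}\in p^2\,\partial^2(\Z_p((t))).
\]
The coefficient of $t^{pj}$ now lies in $p^{2(s+2)}\Z_p$, and since $\ord_p(\alpha_i)=2$ in this case, dividing by $\alpha_i$ again yields a congruence modulo $p^{2(s+1)}$.

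The only real subtlety is the $p$-adic bookkeeping in the cusp-form case: the extra factor $p^{\langle 2\rangle}=p^2$ furnished by the $DR^{*(p)}$-refinement must exactly cancel the valuation of $\alpha_i$ that is introduced when one divides, so that both regimes produce the same modulus $p^{2(\ord_p(j)+1)}$. Verifying $\gamma_p=1$, $\langle 2\rangle=2$, and the membership $f_i\in DR^{*(p)}$ for $i\le(N-1)/2$ is the only nontrivial input beyond the machinery already assembled in \S\ref{sec:summary}.
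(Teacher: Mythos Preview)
Your argument is correct and is exactly the approach the paper intends: the paper's ``proof'' is the single sentence preceding the corollary, which invokes \eqref{eq:frob} on $DR(X,3)$ in general and the refinement $DR(X,3)^{(p)}$ for cusp forms, and you have faithfully unpacked this into the two cases governed by Proposition~\ref{prop:ord}, together with the necessary verifications that $m=2$, $\delta=1$, $\gamma_p=1$, and $\langle 2\rangle=2$ for $p>3$.
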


Suppose $\Gamma$ is a noncongruence subgroup of $\SL_2(\Z)$ of finite
index such that the modular curve $X(\Gamma)$ has a model over $\Q$
(see \S\ref{sec:summary}).  Based on Atkin and Swinnerton-Dyer's
discovery, Li, Long and Yang made the precise conjecture (Conjecture
1.1 of \cite{LLY}) that for each integer $k \geq 2$, there exists a
positive integer $M$ such that for every prime $p\nmid M$ there is a
basis of $S_k(\Gamma)\otimes\Z_p$ consisting of $p$-integral forms $h_i(\tau)$,
$1\leq i \leq d:=\dim S_k(\Gamma)$, algebraic integers $A_p(i)$, and
characters $\chi_i$ such that, for each $i$, the Fourier coefficients
of $h_i(\tau)=\sum_j a_i(j)q^\frac{j}{\mu}$ ($\mu$ being the width of
the cusp at infinity) satisfy the congruence relation
\[
a_i(np)-A_p(i)a_i(n)+\chi_i(p)p^{k-1}a_i(n/p) \equiv 0 \pmod
{p^{(k-1)(1+\ord_p(n)}},
\]
for all $n \geq 1$. 

\begin{thm}
  Let $p$ be any prime congruent to $2$ or $3 \pmod{5}$ be a prime. There is no basis of
  $S_3(\Phi_0(5))\otimes\Z_p$, consisting of $p$-integral forms, satisfying Atkin--Swinnerton-Dyer congruence
  relations for $p$.
\end{thm}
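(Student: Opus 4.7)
The plan is to use the explicit action of the crystalline Frobenius $\phi$ on $DR(X,3) \otimes \Z_p$, with $X = X(\Phi_0(5))$, to argue that for $p \equiv 2, 3 \pmod 5$ no nonzero element of $S_3(\Phi_0(5)) \otimes \Z_p$ lies in a $\phi$-stable subspace of $DR(X,3) \otimes \bar\Q_p$ of dimension at most $2$. Since an ASD relation of the Li--Long--Yang shape \cite{LLY} with polynomial $H_i(T) = T^2 - A_p(i) T + \chi_i(p) p^2$ would force $h_i$ to lie in $\ker H_i(\phi)$, such a $\phi$-stable $\le 2$-dimensional subspace, this rules out an ASD eigenbasis.

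First I would record the $\phi$-action on the $\Z_p$-basis $\{f_1, f_2, f_3, f_4\}$ of $DR(X,3) \otimes \Z_p$, in which $S_3 \otimes \Z_p = \langle f_1, f_2 \rangle$ and by Proposition \ref{prop:ord} we have $\phi(f_i) = \alpha_i f_{ip \bmod 5}$ with $\ord_p(\alpha_1) = \ord_p(\alpha_2) = 2$ and $\ord_p(\alpha_3) = \ord_p(\alpha_4) = 0$. For $p \equiv 2, 3 \pmod 5$, multiplication by $p$ is a $4$-cycle on $(\Z/5\Z)^\times$, so $\phi^4 = \Lambda \cdot \mathrm{id}$ where $\Lambda = \alpha_1 \alpha_2 \alpha_3 \alpha_4$ has $\ord_p(\Lambda) = 4$. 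Since $p$ is odd, $T^4 - \Lambda$ has four distinct roots in $\bar\Q_p$, so $\phi$ is diagonalisable there with four distinct eigenvalues of valuation $1$, and its stable subspaces are precisely the spans of subsets of the corresponding eigenvectors $v_\lambda$.

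Solving $\phi(v_\lambda) = \lambda v_\lambda$ explicitly, for $p \equiv 2 \pmod 5$ and with the $f_1$-coefficient normalised to $1$, gives
\[
v_\lambda = f_1 + \frac{\alpha_1}{\lambda} f_2 + \frac{\lambda}{\alpha_3} f_3 + \frac{\lambda^2}{\alpha_3 \alpha_4} f_4, \qquad \lambda^4 = \Lambda,
\]
with an analogous formula for $p \equiv 3 \pmod 5$. If a combination $h = a v_{\lambda_1} + b v_{\lambda_2}$ lay in $S_3 \otimes \bar\Q_p = \langle f_1, f_2 \rangle_{\bar\Q_p}$, the vanishing of its $f_3$- and $f_4$-components would impose the linear system
\[
a \lambda_1 + b \lambda_2 = 0, \qquad a \lambda_1^2 + b \lambda_2^2 = 0,
\]
whose determinant $\lambda_1 \lambda_2 (\lambda_2 - \lambda_1)$ is nonzero (the $\lambda_j$ are distinct and nonzero), forcing $a = b = 0$. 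The $p \equiv 3$ case is identical up to relabelling and yields the analogous determinant $\lambda_2^3 - \lambda_1^3$, which is nonzero because $\lambda_1/\lambda_2 \in \{i, -1, -i\}$ has cube different from $1$. Consequently no nonzero element of $S_3(\Phi_0(5)) \otimes \Z_p$ is annihilated by any polynomial of degree $\le 2$ in $\phi$, so in particular no such basis can exist.

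The main obstacle is the initial reduction, namely confirming that ASD congruences at the LLY rate $p^{2(1 + \ord_p(n))}$ do translate into the exact annihilation $H_i(\phi)(h_i) = 0$ in $DR(X,3) \otimes \Q_p$, rather than just into a mod-$p^N$ approximation. This goes through the local expansion map of Section \ref{sec:qexp}: the congruences are equivalent to $H_i(\tilde\phi)(\tilde h_i)$ lying in $\partial^2(\Z_p((t)))$, and combining this with the $q$-expansion principle for modular forms and Theorem \ref{thm:2ndkindZ} yields the desired vanishing in $DR(X,3) \otimes \Q_p$, to which the eigenvector analysis above then applies.
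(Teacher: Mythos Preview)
Your eigenvector computation is correct: for $p \equiv 2,3 \pmod 5$ the Frobenius $\phi$ permutes the lines $\langle f_i \rangle$ in a single $4$-cycle, has four distinct eigenvalues in $\overline{\Q}_p$, and no nonzero element of $\langle f_1, f_2 \rangle$ lies in the span of at most two eigenvectors. The gap is exactly the step you flag. To pass from the ASD congruences to $H_i(\phi)(h_i) = 0$ in $DR(X,3) \otimes \overline{\Q}_p$ you would need the \emph{converse} of Theorem~\ref{thm:long}, and your sketch via $\locz$ and the $q$-expansion principle does not provide it. The $q$-expansion principle concerns modular forms, not classes in $DR$; and $\locz$ is far from injective once $p$ is inverted: every class in $DR(X,3)$ is represented by a weak cusp form, hence has vanishing constant term at the cusp, while $\Q_p[[t]]/\partial^2(\Q_p[[t]]) \cong \Q_p$ via the constant term, so $\locz \otimes \Q_p$ is identically zero on $DR(X,3) \otimes \Q_p$. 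Whether $\locz$ is injective on the integral lattice $DR(X,3)$ over $\Z_p$ is a separate, nontrivial statement that Theorem~\ref{thm:2ndkindZ} does not settle.

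The paper's argument bypasses this converse entirely and stays at the level of Fourier coefficients. It first applies Theorem~\ref{thm:long}(ii) with the true characteristic polynomial $H_p(T) = T^4 \pm p^4$ (the improved modulus is available because each $g_i$ is a genuine cusp form) to get $b_i(p^4) \equiv \pm p^4\, b_i(1) \pmod{p^6}$, and then plays this off against the assumed three-term recursion $b_i(p^k) \equiv A_p(i)\, b_i(p^{k-1}) \pmod p$ to force $p \mid b_i(p)$ for both $i$. Since $f_1, f_2$ lie in the $\Z_p$-span of $g_1, g_2$, this gives $p \mid a_1(p)$ and $p \mid a_2(p)$, contradicting Corollary~\ref{cor:two}, which (applied with $i=1$ or $i=2$ according to $p \bmod 5$) shows that one of $a_1(p), a_2(p)$ is a $p$-adic unit.
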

\begin{proof}
  Assume that $\{g_1(\tau),g_2(\tau)\}$ is a basis
  satisfying ASD congruences at $p$. Theorem \ref{thm:gross} implies
  that $\rho_\ell$, the $\ell$-adic representation attached to
  $S_3(\Phi_0(5))$, is isomorphic to the quadratic twist of
  Grossencharacter of $\Q(\zeta_5)$. In particular, since $p$ is inert
  in $\Q(\zeta_5)$, we have that $H_p(T)=T^4\pm p^4$. Theorem
  \ref{thm:long} implies that 
  \[
  b_i(p^4m)\equiv \pm p^4 b_i(m) \pmod{p^6}, \textrm{ for }p \nmid
  m\in \N,
  \]
  where $b_i's$ are Fourier coefficients of $g_i(\tau)$. In particular    	$p|b_i(p^4)$.

  Since $g_i(\tau)$ satisfy ASD congruences, for some algebraic
  integer $A_p(i)$ we have that $b_i(p^k)\equiv A_p(i)b_i(p^{k-1})
  \pmod{p}$, for all $k\ge1$. It follows that $p|b_i(p)$ (if this
  were not the case, this would imply that $p\nmid b_i(p^k)$ for all
  $k\ge 1$). Hence the $p$-th Fourier coefficient of $f_1(\tau)$ and
  $f_2(\tau)$ is divisible by p. However, Proposition \ref{prop:ord}
  implies that either $\phi(f_1(\tau))=\alpha_1 f_2(\tau)$ or
  $\phi(f_2(\tau))=\alpha_2 f_1(\tau)$, and
  $\ord_p(\alpha_1)=\ord_p(\alpha_2)=0$. It follows from Corollary
  \ref{cor:two} that $p$-th Fourier coefficient of $f_1(\tau)$ or
  $f_2(\tau)$ is not divisible by $p$ (since $a_i(1)=1$), which is in contradiction with
  our assumption.
\end{proof}

\begin{rem*}
  J. Kibelbek \cite{KIB} has given an example of a space of weight two
  modular forms that does not admit a basis satisfying Atkin and
  Swinnerton-Dyer congruence relations.
\end{rem*}

\bibliographystyle{amsplain}

\end{document}